\begin{document}

\vfuzz2pt 
\newcommand{\red}{\color{red}}
\newcommand{\blue}{\color{blue}}
\newcommand{\black}{\color{black}}

 \newtheorem{thm}{Theorem}[section]
 \newtheorem{cor}{Corollary}[section]
 \newtheorem{lem}{Lemma}[section]
 \newtheorem{prop}{Proposition}[section]
 \theoremstyle{definition}
 \newtheorem{defn}{Definition}[section]
 \theoremstyle{remark}
 \newtheorem{rem}{Remark}[section]
 \numberwithin{equation}{section}
\newcommand{\CC}{\mathbb{C}}
\newcommand{\KK}{\mathbb{K}}
\newcommand{\ZZ}{\mathbb{Z}}
\newcommand{\RR}{\mathbb{R}}
\def\a{{\alpha}}

\def\b{{\beta}}

\def\d{{\delta}}

\def\g{{\gamma}}

\def\l{{\lambda}}

\def\gg{{\mathfrak g}}
\def\cal{\mathcal }

\title{On Solvable Lie and Leibniz Superalgebras with maximal codimension of  nilradical}

\author{L.M. Camacho}
\address{Luisa Mar\'{i}a Camacho \newline \indent
Dpto. Matem{\'a}tica Aplicada I, Universidad de Sevilla, Sevilla
 (Spain) }
\email{lcamacho@us.es}

\author{R.M. Navarro}
\address{Rosa Mar{\'\i}a Navarro \newline \indent
Dpto. de Matem{\'a}ticas, Universidad de Extremadura, C{\'a}ceres
 (Spain) }
\email{rnavarro@unex.es}
\thanks{This work has been  supported  by Agencia Estatal de Investigaci\'on (Spain), grant MTM2016-79661-P (European FEDER support included, EU)}

\author{B.A. Omirov}
\address{ Bakhrom Omirov \newline \indent
National University of Uzbekistan, Tashkent (Uzbekistan)}
\email{{\tt omirovb@mail.ru}}

\begin{abstract} Along this paper we show that under certain conditions the method for describing of solvable  Lie and Leibniz algebras with maximal codimension of nilradical is also extensible to Lie and Leibniz superalgebras, respectively. In particular, we totally determine the solvable Lie and Leibniz superalgebras with maximal codimension of model filiform and model nilpotent nilradicals. Finally, it is established that the superderivations of the obtained superalgebras are inner.

\end{abstract}

\maketitle
{\bf 2010 MSC:} {\it  17A32; 17A70; 17B30;  17B40.}

{\bf Key-Words:} {\it  solvable Lie  superalgebras, solvable Leibniz superalgebras, derivations, nilpotent Lie superalgebras, nilpotent Leibniz superalgebras.}

\section{Introduction}

In $1950$ A.I. Malcev proved that a solvable Lie algebra is uniquely determined by its nilradical \cite{Malcev}. A decade later Mubarakzjanov proposed the method of description of solvable Lie algebra in terms of nilradical and its nil-independent derivations \cite{metodoLie}. He noted that the codimension of nilradical does not exceed the number of nil-independent derivations and the number of generators of the nilradical. Since then the classification of solvable Lie algebras with the Abelian, Heisenberg, filiform, quasi-filiform nilradicals are obtained, see for instance \cite{Casas1,Ancochea,Casas11,Casas13,Casas15, LAA2010}. Recently, the topic of research has drawn a lot of attention and in particular in \cite{metodoLeib} the authors extended Mubarakzjanov's method to Leibniz algebras. Therefore, along the last few years have been obtained classifications with diferent types of Leibniz nilradicals \cite{lisa,LMA, 2,2c}. Among all the solvable Lie and Leibniz algebras special mention deserve the maximal (in dimensional sense) solvable Lie and Leibniz algebras with given nilradical, due to its properties, they are in some cases cohomologically rigid \cite{rigid_algebras}.

The main goal of this paper is the study of maximal solvable Lie and Leibniz superalgebras with a given nilradical. It should be noted that the structures of solvable Lie and Leibniz superalgebras are more complex than structures of solvable Lie and Leibniz algebras \cite{Onindecomposable}. In particular, Lie's theorem is not true (neither in its general or its reduced forms) for a solvable Leibniz (respectively, Lie)  superalgebras $L$. Moreover, the square of a solvable superalgebra is not necessary to be nilpotent  \cite{codimension}. Nevertheless, throughout this paper we show that under certain conditions Mubarakzjanov's method of description of maximal solvable Lie and Leibniz algebras with given nilradical is also applicable for Lie and Leibniz superalgebras, respectively. In particular, we determine the maximal dimensional solvable Lie and Leibniz superalgebras with model filiform and model nilpotent nilradicals. Additionally, we compute the space of superderivations on the maximal-dimensional solvable Lie and Leibniz superalgebras with filiform and model nilpotent nilradical and show that all of these superderivations are inner. These results constitute an extension of the results obtained for similar Lie and Leibniz algebras. Finally, let us note that all Lie and Leibniz superalgebras obtained in this paper are an excellent candidates to cohomologycally rigid  superalgebras, we leave this study for a further work  though.

\section{Preliminary Results}

A vector space $V$ is said to be $\ZZ_2${\em -graded} if it admits a decomposition in direct sum, $V=V_{\bar 0} \oplus V_{\bar 1}$, where ${\bar 0}, {\bar 1} \in \ZZ_2$. An element $x \in V$ is called {\it homogeneous of degree} $\bar{i}$ if it is an element of $V_{\bar i}, {\bar i} \in \ZZ_2$. In particular, the elements of $V_{\bar 0}$ (resp. $V_{\bar 1}$) are also called {\em even} (resp. {\em odd}). For a homogeneous element $x\in V$ we denote $|x|$ the the degree of $x$ (either ${\bar 0}$ or ${\bar 1}$).

A {\em Lie superalgebra} (see \cite{Kac2}) is a $\ZZ_2$-graded vector space  $\gg=\gg_{\bar 0} \oplus \gg_{\bar 1}$, with an even bilinear commutation operation (or ``supercommutation'') $[\cdot,\cdot]$, which for an arbitrary homogeneous elements $x, y, z$ satisfies the conditions
\begin{enumerate}
\item[1.] $[x,y]=-(-1)^{|x| |y|}[y,x],$

\item[2.]  $(-1)^{|z| |x|}[x,[y,z]] + (-1)^{|x||y|}[y,[z,x]] + (-1)^{|y||z|}[z,[x,y]]=0$ {\it (super Jacobi identity).}
\end{enumerate}

Thus, $\gg_{\bar 0}$ is an ordinary Lie algebra, and $\gg_{\bar 1}$ is a module over $\gg_{\bar 0}$; the Lie superalgebra structure also contains the symmetric pairing $S^2 \gg_{\bar 1} \longrightarrow \gg_{\bar 0}$.

In general, the  {\em descending central sequence}  of a Lie superalgebra $\gg=\gg_{\bar 0} \oplus \gg_{\bar 1}$ is defined in the same way as for Lie algebras:  ${\cal C}^0(\gg): =\gg$, ${\cal C}^{k+1}(\gg):=[{\cal C}^k(\gg),\gg]$  for all $k\geq 0$. Consequently, if ${\cal C}^k(\gg)=\{0\}$ for some $k$, then the Lie superalgebra is
called  {\em nilpotent}.  Then, the smallest integer $k$ such that ${\cal C}^k(\gg)=\{0\}$ is called the {\em nilindex} of the Lie superalgebra $\gg$. Likewise, the {\em derived sequence} of $\gg$ is defined by
${\cal D}^0(\gg): =\gg$, ${\cal D}^{k+1}(\gg):=[{\cal D}^k(\gg),{\cal D}^k(\gg)]$  for all $k\geq 0$. If this sequence is stabilised in zero, then the Lie superalgebra is said to be {\em solvable}. All nilpotent Lie superalgebras are solvable ones.
Engel's theorem and its direct consequences remaind valid for Lie superalgebras. In particular, a Lie superalgebra $L$ is nilpotent if and only if $ad_{L} x$ is nilpotent for every homogeneous element $x$ of $L$. Moreover, for solvable Lie superalgebras we have that a Lie superalgebra $L$ is solvable if and only if its Lie algebra $L_{\overline{0}}$ is solvable. Nevertheless, we do not have the analog of Lie's Theorem and neither its corollaries for solvable Lie superalgebras.

At the same time, there are also defined two other crucial sequences denoted by ${\cal C}^{k}(\gg_{\bar 0})$ and ${\cal C}^{k}(\gg_{\bar 1})$ which will play an important role in our study. They are defined as follows:
$${\cal C}^0(\gg_{\bar i}):=\gg_{\bar i}, {\cal C}^{k+1}(\gg_{\bar i}) := [\gg_{\bar 0}, {\cal C}^k(\gg_{\bar i})], \ k\geq 0, {\bar i} \in \ZZ_2.$$

In the study nilpotent Lie algebras we have the invariant called characteristic sequence that can be naturally extended  for Lie superalgebras.

\begin{defn} For an arbitrary element $x\in \gg_0$, the adjoint operator $ad_x$ is a nilpotent endomorphism of the space $\gg_i$, where $i\in \{0,1\}$. We denote by $gz_i(x)$ the descending sequence of dimensions of
Jordan blocks of $ad_x$. Then, we define the invariant of a Lie
superalgebra $\gg$ as follows:
$$
gz(\gg)=\left(\left.\max_{x\in \gg_0 \setminus[\gg_0,\gg_0]} gz_0(x) \ \right|
\max_{\widetilde{x}\in \gg_0\setminus[\gg_0,\gg_0]} gz_1(\widetilde{x})\right),
$$
where $gz_i$ is in lexicographic order.

The couple $gz(\gg)$ is called characteristic sequence of the Lie superalgebra $\gg$.
\end{defn}

Let us recall the definition of superderivations for Lie superalgebras \cite{Kac2}. A superderivation of degree $s$ of a Lie superalgebra $L$, $s\in \ZZ_2$, is an endomorphism $D \in End_s L$ with the property
$$D(a b)=D(a)b + (-1)^{s \cdot deg a} a D(b).$$

If we denote $Der_{s}L \subset End_s L$ the space of all superderivations of degree $s$, then $Der L=Der _{\overline{0}}L\oplus Der _{\overline{1}}L$ is the Lie superalgebra of superderivations of $L$, with $Der _{\overline{0}}L$ composed by even superderivations and $Der _{\overline{1}}L$ by odd ones.

\subsection{Preliminaries for Leibniz superalgebras.}

\

Remark that many results and definitions of the above section can be extended for Leibniz superalgebras.

\begin{defn}\label{defB} \cite{AO0}. A $\ZZ_2$-graded vector space $L=L_{\bar 0} \oplus L_{\bar 1}$ is called a  {\em Leibniz superalgebra} if it is equipped with a product $[\cdot,\cdot]$ which for an arbitrary element $x$ and homogeneous elements $y, z$ satisfies the condition $$[x,[y,z]]=[[x,y],z]-(-1)^{|y| |z|}[[x,z],y] \hspace{0.4cm} \hbox{{\it (super Leibniz identity)}}.$$
\end{defn}

Note that if a Leibniz superalgebra $L$ satisfies the identity $[x,y]=-(-1)^{|x| |y|}[y,x]$ for any homogeneous elements $x, y \in L$, then the super Leibniz identity  becomes the super Jacobi identity.  Consequently,  Leibniz superalgebras are a generalization of Lie superalgebras. Also and in the same way as for Lie superalgebras, isomorphisms are assumed to be consistent with the $\ZZ_2$-graduation.

Let us now denote  by $R_x$ the right multiplication operator, i.e., $R_x : L \rightarrow L$  given as $R_x(y) := [y,x]$ for $y \in L$, then the super Leibniz identity can be expressed as
$R_{[x,y]} = R_yR_x-(-1)^{|x| |y|}R_xR_y.$

If we denote by $R(L)$ the set of all right multiplication operators, then $R(L)$ with respect to the following multiplication
\begin{equation}\label{product_R(L)}
<R_a,R_b>:=R_aR_b-(-1)^{\bar{i}\bar{j}}R_bR_a
\end{equation}
for $R_a\in {R(L)_{\bar{i}}}$, $R_b\in {R(L)_{\bar{j}}}$, forms a Lie superalgebra.  Note that $R_a$ is a derivation. In fact, the condition for being a derivation of a Leibniz superalgebra (for more details see \cite{null-filiform}) is $d([x,y])=(-1)^{\mid d \mid \mid y \mid} [d(x),y]+[x,d(y)]$.  Since the degree of $R_z$ as homomorphism between $\ZZ_2$-graded vector spaces is the same as the degree of the homogeneous element $z$, that is $\vert R_z\vert=\vert z \vert$, then the condition for $R_z$ to be a derivation is exactly $R_z([x,y])=(-1)^{\mid z \mid \mid y \mid} [R_z(x),y]+[x,R_z(y)]$. This last condition can be rewritten $[[x,y],z]=(-1)^{\mid z \mid \mid y \mid} [[x,z],y]+[x,[y,z]]$ which is nothing but the super (graded) Leibniz identity.

Let us note that the  concepts of descending central sequence, nilindex, the variety of Leibniz superalgebras and Engel's theorem are natural extensions from Lie theory.

Let $V = V_{\bar 0} \oplus V_{\bar 1}$ be the underlying vector space of $L$, $L = L_{\bar 0} \oplus L_{\bar 1} \in Leib^{n,m}$, being $Leib^{n,m}$ the variety of Leibniz superalgebras,  and let $G(V)$ be the group of the invertible linear mappings of the form  $f=f_{\bar 0}+f_{\bar 1}$, such that $f_{\bar 0} \in GL(n,\mathbb{C})$ and $f_{\bar 1} \in GL(m,\mathbb{C})$ (then $G(V) = GL(n,\mathbb{C})\oplus GL(m,\mathbb{C}))$. The action of $G(V)$ on $Leib^{n,m}$ induces an action
on the Leibniz superalgebras variety: two laws  $\lambda_1, \lambda_2$ are {\em isomorphic}  if there exists a linear mapping $f = f_{\bar 0}+f_{\bar 1} \in G(V)$, such that  $$\lambda_2(x,y) = f_{\bar{i}+\bar{j}}^{-1}(\lambda_1(f_{\bar{i}}(x),
f_{\bar{j}}(y))), \hspace{0.2cm} \hbox{for any} \  x \in V_{\bar{i}}, y \in V_{\bar{j}}.$$

Furthermore, the description of the variety of any class of algebras or superalgebras is a difficult problem. Different papers (for example, \cite{Omirov, BS, libroKluwer, GO}) are regarding the applications of algebraic groups theory to the description of the variety of Lie and Leibniz algebras.

\begin{defn}
 For a Leibniz superalgebra $L=L_{\bar 0} \oplus L_{\bar 1}$ we define the {\em right annihilator of} $L$ as the set $Ann(L):=\{x \in L : [L,x]=0\}$.
\end{defn}

It is easy to see that $Ann(L)$ is a two-sided ideal of $L$ and $[x,x] \in Ann(L)$ for any $x \in L_{\bar 0}$. This notion is compatible with the right annihilator in Leibniz algebras. If we consider  the ideal $I:=ideal<[x,y]+(-1)^{ |x| |y|}[y,x]>$, then $I \subset Ann(L).$

Let $L=L_{\bar 0} \oplus L_{\bar 1}$ be a nilpotent Leibniz superalgebra with $\dim L_{\bar 0}=n$ and $\dim L_{\bar 1}=m.$ From Equation \eqref{product_R(L)} we have that $R(L)$ is a Lie superalgebra, and in particular $R(L_{\bar 0})$ is a Lie algebra. As $L_{\bar 1}$ has $L_{\bar 0}$-module structure we can consider $R(L_{\bar 0})$ as a subset of $GL(V_{\bar 1})$  , where  $V_{\bar 1}$  is the underlying vector space of $L_{\bar 1}$. So, we have a Lie algebra formed by nilpotent endomorphisms of $V_{\bar 1}$. Applying the Engel's theorem  we have the existence of a sequence of subspaces of $V_{\bar 1}$,
$V_0 \subset V_1 \subset V_2 \subset \dots \subset V_m = V_{\bar 1},$ with $R(L_{\bar 0})(V_{\overline{i+1}})\subset V_{\bar i}.$ Then, it can be defined the descending sequences $C^k(L_{\bar 0})$ and $C^k(L_{\bar 1})$ and the  super-nilindex  in the same way as for Lie superalgebras. That is,  ${\cal C}^0(L_{\bar i}):=L_{\bar i}, {\cal C}^{k+1}(L_{\bar i}) := [ {\cal C}^k(L_{\bar i}),L_{\bar 0}], \quad k\geq 0, {\bar i} \in \ZZ_2.$ If $L=L_{\bar 0} \oplus L_{\bar 1}$ is a nilpotent Leibniz superalgebra, then $L$ has  {\em super-nilindex} or {\em s-nilindex} $(p,q)$ if satisfies
$${\cal C}^{p-1}(L_{\bar 0}) \neq 0, \qquad {\cal C}^{q-1}(L_{\bar 1})\neq 0, \qquad {\cal C}^{p}(L_{\bar 0})={\cal C}^{q}(L_{\bar 1})=0.$$
We have for Lie superalgebras the invariant called characteristic
sequences that can be naturally extended  for Leibniz
superalgebras. Thus, we have the following definition.

\begin{defn} For an arbitrary element $x\in L_0$, the operator $R_x$ is
a nilpotent endomorphism of the space $L_i$, where $i\in \{0,1\}$.
We denote by $gz_i(z)$ the descending sequences of dimensions of
Jordan blocks of $R_x$. Then, we define the invariant of a Leibniz
superalgebra $L$ as follows:
$$
gz(L)=\left(\left.\max_{x\in L_0 \setminus[L_0,L_0]} gz_0(x) \ \right|
\max_{\widetilde{x}\in L_0\setminus[L_0,L_0]} gz_1(\widetilde{x})\right),
$$
where $gz_i$ is in lexicographic order.

The couple $gz(L)$ is called characteristic sequences of a Leibniz superalgebra $L$.
\end{defn}

\section{Maximal-dimensional solvable Lie superalgebras with filiform nilradical}
Troughout this section we study solvable Lie superalgebras with maximal dimension of the complementary space to nilradical, being the nilradical the model filiform Lie superalgebra $L^{n,m}$. Let us recall that  in \cite{solvableSA} the authors proved that under  the condition of being $L^2$ nilpotent, any solvable Lie superalgebra over the real or complex numbers field can be obtain by means of outer non-nilpotent superderivations of the nilradical. Therefore, for any solvable Lie superalgebra ${\mathfrak{r}}$ with ${\mathfrak{r}}^2$ nilpotent,  we have a decomposition into semidirect sum:
$${\mathfrak{r}}={\mathfrak{t}}\overrightarrow{\oplus} {\mathfrak{n}},$$
$$ [{\mathfrak{t}},{\mathfrak{n}}]\subset {\mathfrak{n}}, \ \ [{\mathfrak{n}},{\mathfrak{n}}]\subset {\mathfrak{n}}, \ \ [{\mathfrak{t}},{\mathfrak{t}}]\subset {\mathfrak{n}}.$$

Along the present section we consider as nilradical the model filiform Lie superalgebra $L^{n,m}$, that is,  the simplest filiform Lie superalgebra which is defined by the only  non-zero products
$$L^{n,m}:
       \left\{\begin{array}{ll}
          [x_1,x_i]=-[x_i,x_1]=x_{i+1},& 2\leq i \leq n-1\\[1mm]
          [x_1,y_j]=-[y_j,x_1]=y_{j+1},& 1\leq j \leq m-1
       \end{array}\right.$$
where $\{ x_1, \dots, x_n \}$ be a basis of $(L^{n,m})_{\bar{0}}$  and $\{ y_1, \dots, y_m \}$ be a basis of $(L^{n,m})_{\bar{1}}$. Note that $L^{n,m}$ is the most important filiform Lie superalgebra, in complete analogy to Lie algebras, since all the other filiform
Lie superalgebras can be obtained from it by deformations \cite{Bor07}. These infinitesimal deformations are given by the even 2-cocycles $Z^2_0(L^{n,m},L^{n,m})$.

On the other hand, ${\mathfrak{t}}=span\{ t_1,t_2,t_3\}$ corresponds with the maximal torus of derivations of $L^{n,m}$, which is composed, in particular, by even superderivations. Then  ${\mathfrak{t}}$ is Abelian ($[{\mathfrak{t}},{\mathfrak{t}}]=0$) and the operators $ad t_i \ (t_i \in {\mathfrak{t}})$ are diagonal. A straightforward computation leads to the following action of ${\mathfrak{t}}$ over $L^{n,m}$:
$$\begin{array}{ll}
[t_1,x_i]=ix_{i}, & 1\leq i \leq n; \\[1mm]
 [t_1,y_j]=jy_{j}, &  1\leq j \leq m; \\[1mm]
[t_2,x_i]=x_{i}, & 2\leq i \leq n; \\[1mm]
[t_3,y_j]=y_{j}, & 1\leq j \leq m.
\end{array}$$

Thus, the solvable Lie superalgebra that we are going to consider, and henceforth named  $SL^{n,m}$, is  defined in a basis  $\{ x_1, \dots, x_n, t_1,t_2,t_3,y_1, \dots, y_m \}$ by the only  non-zero bracket products
$$SL^{n,m}:
       \left\{\begin{array}{ll}
          [x_1,x_i]=-[x_i,x_1]=x_{i+1},& 2\leq i \leq n-1;\\[1mm]
          [x_1,y_j]=-[y_j,x_1]=y_{j+1},& 1\leq j \leq m-1;\\[1mm]
          [t_1,x_i]=-[x_i,t_1]=ix_{i}, & 1\leq i \leq n; \\[1mm]
           [t_1,y_j]=-[y_j,t_1]=jy_{j}, &  1\leq j \leq m; \\[1mm]
[t_2,x_i]=-[x_i,t_2]=x_{i}, & 2\leq i \leq n; \\[1mm]
[t_3,y_j]=-[y_j,t_3]=y_{j}, & 1\leq j \leq m; \\[1mm]
       \end{array}\right.$$
with $\{ x_1, \dots, x_n, t_1,t_2,t_3\}$ a basis of $(SL^{n,m})_{\bar{0}}$  and $\{ y_1, \dots, y_m\}$ a basis of $(SL^{n,m})_{\bar{1}}$. The purpose now is to find out whether $SL^{n,m}$ is the unique solvable Lie superalgebra with  maximal codimension of nilradical $L^{n,m}$.

\begin{thm} \label{solfil} An arbitrary complex maximal-dimensional solvable Lie superalgebra $L$ with $L^2$ nilpotent   and  nilradical  $L^{n,m}$ is isomorphic to $SL^{n,m}$.
\end{thm}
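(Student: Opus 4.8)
The plan is to carry out Mubarakzjanov's method in the graded setting. Since $L^2$ is assumed nilpotent, the decomposition result of \cite{solvableSA} applies and yields a semidirect splitting $L=\mathfrak{t}\,\overrightarrow{\oplus}\,L^{n,m}$ in which the elements of the complementary space $\mathfrak{t}$ act on the nilradical as outer, non-nilpotent, nil-independent superderivations, and $[\mathfrak{t},\mathfrak{t}]\subseteq L^{n,m}$. First I would bound $\dim\mathfrak{t}$. Computing $[L^{n,m},L^{n,m}]=\operatorname{span}\{x_3,\dots,x_n,y_2,\dots,y_m\}$ shows that $L^{n,m}$ is minimally generated by the three elements $x_1,x_2,y_1$; by the principle (recalled in the introduction) that the number of nil-independent derivations cannot exceed the number of generators, $\dim\mathfrak{t}\le 3$, and the maximality hypothesis forces $\dim\mathfrak{t}=3$.

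Next I would show that $\mathfrak{t}$ is purely even. If $t\in\mathfrak{t}_{\bar 1}$ were an odd element of the complement, then $[t,t]\in[\mathfrak{t},\mathfrak{t}]\subseteq L^{n,m}$, so $\operatorname{ad}_{[t,t]}$ restricted to the nilpotent ideal $L^{n,m}$ is nilpotent; but for odd $t$ one has $\operatorname{ad}_{[t,t]}=2\,\operatorname{ad}_t^{\,2}$, whence $\operatorname{ad}_t|_{L^{n,m}}$ would itself be nilpotent, contradicting non-nilpotency of $\operatorname{ad}_t$ on the nilradical. Hence $\mathfrak{t}=\mathfrak{t}_{\bar 0}$ is spanned by three even elements. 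Using the Jordan decomposition of the commuting derivations $\operatorname{ad}_{t_i}$ and absorbing their nilpotent parts, I may assume that the image of $\mathfrak{t}$ in $\operatorname{Der}(L^{n,m})$ is a three-dimensional maximal torus of commuting semisimple derivations.

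The computational core is to normalize this torus. Imposing the derivation identity on a diagonal even derivation $D$, the relations $[x_1,x_i]=x_{i+1}$ and $[x_1,y_j]=y_{j+1}$ force its eigenvalues to satisfy $a_{i+1}=a_1+a_i$ and $b_{j+1}=a_1+b_j$, so $D$ is completely determined by the triple $(a_1,a_2,b_1)$; the diagonal derivations therefore form exactly a three-dimensional space, which, being abelian, semisimple and of the maximal allowed dimension, is a maximal torus. Since all maximal tori are conjugate under $\operatorname{Aut}(L^{n,m})$, I may apply an automorphism (extending to $L$) so that the $\operatorname{ad}_{t_i}$ are simultaneously diagonal in the basis $\{x_i,y_j\}$, and then choose the basis $t_1,t_2,t_3$ of $\mathfrak{t}$ realizing $(a_1,a_2,b_1)=(1,2,1),(0,1,0),(0,0,1)$, which reproduces precisely the action of $\mathfrak{t}$ recorded in $SL^{n,m}$. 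The crucial structural observation enabling this is that the weights are pairwise distinct --- $x_1\mapsto(1,0,0)$, $x_i\mapsto(i,1,0)$, $y_j\mapsto(j,0,1)$ --- so every weight space is one-dimensional, the diagonalizing basis is the filiform basis up to scaling, and a rescaling normalizes the structure constants $[x_1,x_i]=x_{i+1}$ and $[x_1,y_j]=y_{j+1}$.

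Finally I would kill the brackets inside the torus. As the $\operatorname{ad}_{t_i}$ commute as operators, the super Jacobi identity for the even elements $t_i,t_j$ gives $\operatorname{ad}_{[t_i,t_j]}=[\operatorname{ad}_{t_i},\operatorname{ad}_{t_j}]=0$, so $[t_i,t_j]\in L^{n,m}$ is central in $L$; writing $[t_i,t_j]=\sum_l c_l z_l$ in the basis of $L^{n,m}$ and applying each diagonal $\operatorname{ad}_{t_k}$ yields $c_l\,\lambda_k(z_l)=0$, and since every basis weight is nonzero (indeed the first coordinate is always $\ge 1$) all $c_l$ vanish, so $[\mathfrak{t},\mathfrak{t}]=0$. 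Collecting the brackets then gives $L\cong SL^{n,m}$. I expect the main obstacle to be the two structural reductions --- proving that the complement is even of dimension exactly three, and verifying that the distinctness of the weights makes the diagonalizing basis coincide with the filiform one, so that no residual nilpotent or off-diagonal terms survive --- after which the remaining normalizations are routine.
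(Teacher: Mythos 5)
Your handling of the super-specific issues is sound, and in one respect cleaner than the paper's: showing that an odd element $t$ of the complement would satisfy $\mathrm{ad}_{[t,t]}=2\,\mathrm{ad}_t^{\,2}$ with $[t,t]$ in the nilradical, hence would act nilpotently and could be adjoined to the nilradical, correctly forces $\mathfrak{t}=\mathfrak{t}_{\bar 0}$. The paper proceeds differently: it proves that \emph{every} superderivation of $L^{n,m}$ (odd ones included, via the vanishing of condition (\ref{odder3})) is a derivation of $L^{n,m}$ viewed as a $\mathbb{Z}_2$-graded Lie algebra, and then imports the Lie-algebra classification of \cite{UzMathJournal}, exhibiting the explicit isomorphism $t_1=z_1+2z_2+z_3$, $t_2=z_2$, $t_3=z_3$. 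Note also a logical ordering problem on your side: the bound $\dim\mathfrak{t}\le 3$ by the number of generators rests on Lie's theorem applied to the image of $\mathfrak{t}$ in $\mathrm{End}(N/[N,N])$, and Lie's theorem fails for solvable superalgebras (as the paper stresses), so you must establish evenness of $\mathfrak{t}$ \emph{before} invoking that bound, not after.

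The genuine gap is the step where you replace the citation of \cite{UzMathJournal} by the maximal-torus argument. First, the operators $\mathrm{ad}_{t_i}|_{L^{n,m}}$ are not known to commute: $[\mathrm{ad}_{t_i},\mathrm{ad}_{t_j}]=\mathrm{ad}_{[t_i,t_j]}$ with $[t_i,t_j]\in L^{n,m}$, so they commute only modulo inner derivations; thus "the Jordan decomposition of the commuting derivations" has no foundation at that point, and their semisimple parts need not commute either. Second, "absorbing their nilpotent parts" is not an operation available to you: the superalgebra $L$ is given, and altering the action of $t_i$ on the nilradical changes the isomorphism class of $L$. Nil-independence does not imply semisimplicity (the operator $I+E_{12}$ on $\mathbb{C}^2$ has no nonzero nilpotent multiple yet is not semisimple, and solvable extensions by semisimple versus non-semisimple actions are in general non-isomorphic), so the assertion that a maximal-dimensional complement acts as a torus is precisely the hard content of the classification --- it is what \cite{UzMathJournal} proves by direct computation for this nilradical, and what the paper cites rather than reproves. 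As written, your normalization step assumes the conclusion. The repair is either to cite \cite{UzMathJournal} after your (correct) reduction to an even complement, or to carry out the direct computation showing that three nil-independent derivations of $L^{n,m}$, pairwise commuting modulo inner derivations, can be simultaneously conjugated into the diagonal torus. Your final step, killing $[\mathfrak{t},\mathfrak{t}]$ using the nonvanishing of the weights, is fine once diagonality is actually established.
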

\begin{proof} It is easy to see that superalgebra $L^{n,m}$ can be considered as nilpotent Lie algebra (a $\mathbb{Z}_2$-graded Lie algebra), because of the fact that there is no symmetric bracket products in the law of $L^{n,m}$. Note also, that under the condition of being $L^2$ nilpotent the techniques used in Lie superalgebras are rather similar to the ones used in Lie algebras, i.e. solvable extension by means of (super)derivations of nilradical, for more details it can be consulted \cite{solvableSA}.  Considering now $L^{n,m}$ as a Lie algebra, the results of \cite{UzMathJournal} allow us to assert that there is a unique solvable Lie algebra with maximal codimension of nilradical, i.e. maximal dimension of the complementary space to nilradical. Moreover this maximal codimension is equal to the number of generators of the nilradical,  $3$ in our case.  It can be easily seen that this unique solvable Lie algebra described in \cite{UzMathJournal} is isomorphic to $SL^{n,m}$, considered the latter as a Lie algebra. Indeed, following to Theorem $3.2$  \cite{UzMathJournal} we have a solvable Lie algebra
	
		$$
       \left\{\begin{array}{ll}
          [x_1,x_i]=
          -[x_i,x_1]=
          x_{i+1},& 2\leq i \leq n-1;\\[1mm]
          [x_1,y_j]=
          -[y_j,x_1]=
          y_{j+1},& 1\leq j \leq m-1;\\[1mm]
          [z_1,x_1]=
          -[x_1,z_1]=
          x_1, &\\[1mm]
          [z_1,x_i]=
          -[x_i,z]=
          (i-2)x_{i}, & 3\leq i \leq n; \\[1mm]
          [z_1,y_j]=
          -[y_j,z_1]=
          (j-1)y_{j}, &  2\leq j \leq m; \\[1mm]
[z_2,x_i]=
-[x_i,z_2]=
x_{i}, & 2\leq i \leq n; \\[1mm]
[z_3,y_j]=
-[y_j,z_3]=
y_{j}, & 1\leq j \leq m; \\[1mm]
       \end{array}\right.$$
and the isomorphism defined by $\{t_1=z_1+2z_2+z_3, \ t_2=z_2, \ t_3=z_3\}$ shows that this Lie algebra is isomorphic to $SL^{n,m}$.

Now,  we consider $L^{n,m}$ as a Lie superalgebra. Let us recall now, the definition of superderivations of superalgebras \cite{Kac2}. A superderivation of degree $s$ of a superalgebra $L$, $s\in \ZZ_2$, is an endomorphism $D \in End_s L$ with the property
$$D([a, b])=[D(a),b] + (-1)^{s \cdot deg a} [a, D(b)]$$

Thus, the even superderivations $D$ of $L^{n,m}$ are in particular derivations of the $\mathbb{Z}_2$-graded Lie algebra $L^{n,m}$ verifying $D(L^{n,m}_{\overline{0}})\subset L^{n,m}_{\overline{0}}$ and $D(L^{n,m}_{\overline{1}})\subset L^{n,m}_{\overline{1}}$. The odd superderivations $D$ of $L^{n,m}$, on the other hand, verify  the following three conditions:
\begin{eqnarray}
 \label{odder1}D([x_i,x_j])=[D(x_i),x_j]+[x_i,D(x_j)],  & \mbox{ if } x_i,x_j \in L^{n,m}_{\overline{0}}\\[1mm]
 \label{odder2}D([x_i,y_j])=[D(x_i),y_j]+[x_i,D(y_j)],  & \mbox{ if } x_i \in L^{n,m}_{\overline{0}},  y_j \in L^{n,m}_{\overline{1}}\\[1mm]
\label{odder3}D([y_i,y_j])=[D(y_i),y_j]-[y_i,D(y_j)],  & \mbox{ if } y_i,y_j \in L^{n,m}_{\overline{1}}
 \end{eqnarray}

 The equations (\ref{odder1}) and (\ref{odder2}) correspond with the Lie derivation condition. Thus, it remains to study the equation (\ref{odder3}). Taking into account now the law of $L^{n,m}$ it can be easily seen that the only possibility for having at least one non-null term in the equation (\ref{odder3}) corresponds with the existence of $y_i$ such that $x_1 \in D(y_i)$. Next, we explore such possibilities. For $i\geq 2$, we get
 $$D(y_i)=D([x_1,y_{i-1}])=[D(x_1),y_i]+[x_1,D(y_{i-1})]=[x_1,D(y_{i-1})] $$

Since $[x_1,D(y_{i-1})] \in span\{ x_3, \dots,x_n\}$ we can exclude $i$ for $i\geq 2$. Suppose $i=1$, and then $$D(y_1)=\alpha_1 x_1+\displaystyle \sum_{k=2}^{n} \alpha_k x_k, \qquad \mbox{ with } \alpha_1 \neq 0.$$

From equation (\ref{odder3}) we get in particular
$$D([y_1,y_1])=0=[D(y_1),y_1]-[y_1,D(y_1)]=2\alpha_1 y_2$$
and then $\alpha_1=0$. Therefore the equation  (\ref{odder3}) vanishes over $L^{n,m}$ and consequently all the odd superderivations of $L^{n,m}$ are  in particular derivations of the $\mathbb{Z}_2$-graded Lie algebra $L^{n,m}$ verifying $D(L^{n,m}_{\overline{0}})\subset L^{n,m}_{\overline{1}}$ and $D(L^{n,m}_{\overline{1}})\subset L^{n,m}_{\overline{0}}$.

Thus,  all the superderivations of the Lie superalgebra $L^{n,m}$ are particular cases of derivations of the $\mathbb{Z}_2$-graded Lie algebra $L^{n,m}$. Therefore, we can assert that $SL^{n,m}$ is not only the  unique maximal-dimensional solvable Lie algebra with  nilradical the Lie algebra $L^{n,m}$, but also is the  unique maximal-dimensional solvable Lie superalgebra $L$ with $L^2$ nilpotent and  nilradical the model filiform Lie superalgebra $L^{n,m}$.
\end{proof}

\section{Maximal-dimensional solvable Lie superalgebras with model nilpotent  nilradical}
Throughout this section firstly we extend  the definition of model nilpotent to Lie superalgebras and after that we obtain the description of maximal-dimensional solvable Lie superalgebra with model nilpotent  nilradical.

Next we recall the definition of model nilpotent Lie algebra, for more details it can be consulted for instance \cite{rigid_algebras}. Thus,  the model nilpotent Lie algebra with arbitrary characteristic sequence $(n_1,n_2, \cdots,n_k,1)$  is composed by the Lie algebras admitting a basis $\{x_1, \cdots, x_{n_1+1}, \cdots, x_{n_1+n_2+1}, $ $\cdots, x_{n_1+\cdots n_k+1} \}$ such that the only non-null brackets (expanded by skew-symmetry) are exactly the following

$$\begin{array}{rcl}
[x_1,x_j]&=&x_{j+1}, \ 2\leq j \leq n_1, \\[1mm]
[x_1,x_{n_1+j}]&=&x_{n_1+1+j}, \ 2\leq j \leq n_2, \\[1mm]
\vdots & & \\ [1mm]
[x_1,x_{n_1+\cdots +n_{k-2}+j}]&=&x_{n_1+\cdots+n_{k-2}+1+j}, \ 2\leq j \leq n_{k-1}, \\[1mm]
[x_1,x_{n_1+\cdots +n_{k-1}+j}]&=&x_{n_1+\cdots+n_{k-1}+1+j}, \ 2\leq j \leq n_{k}. \\[1mm]
\end{array}$$

Therefore, we present the next definition in a natural way.

\begin{defn} The model nilpotent Lie superalgebra  with arbitrary characteristic sequences $(n_1, \cdots,n_k,1 | m_1, \cdots, m_p)$  is nothing but the Lie superalgebra admitting a basis $\{x_1, $$\cdots,$ $
x_{n_1+\cdots n_k+1},$ $ y_1, \cdots,y_{m_1+\cdots+m_p} \}$ with $x_i$ even basis vectors and $y_j$ odd basis vectors, such that the only non-null brackets are exactly the following

\

$N(n_1, \cdots,n_k,1 | m_1, \cdots, m_p):$
$$\left\{\begin{array}{ll}
[x_1,x_j]=
-[x_j,x_1]=
x_{j+1}, & 2\leq j \leq n_1, \\[1mm]
[x_1,x_{n_1+\cdots+n_j+i}]=
-[x_{n_1+\cdots+n_j+i},x_1]=
x_{n_1+\cdots+n_j+i+1}, & 1\leq j \leq k-1, \ 2 \leq i \leq n_{j+1},\\[1mm]
[x_1,y_j]=
-[y_j,x_1]=
y_{j+1}, & 1\leq j \leq m_1-1, \\[1mm]
[x_1,y_{m_1+\cdots+m_j+i}]=
-[y_{m_1+\cdots+m_j+i},x_1]=
y_{m_1+\cdots+m_j+i+1}, & 1\leq j \leq p-1, \ 1 \leq i \leq m_{j+1}-1.\\[1mm]
\end{array}\right.$$
\end{defn}

\begin{rem} Note that the model filiform Lie superalgebra $L^{n,m}$ is exactly  $N(n-1,1 | m)$.

\end{rem}

Let us consider now ${\mathfrak{t}}=span\{ t_1,\dots,t_{k+1},t'_1,\dots,t'_p \}$  the maximal torus of derivations of $N_{(n_1, \cdots,n_k,1 | m_1, \cdots, m_p)}$, which is composed in particular by even superderivations.  A straightforward computation leads to the following action of ${\mathfrak{t}}$ over $N(n_1, \cdots,n_k,1 | m_1, \cdots, m_p):$
$$\begin{array}{ll}
[t_1,x_i]=ix_{i}, & 1\leq i \leq  n_1+\cdots+n_k+1; \\[1mm]
 [t_1,y_j]=jy_{j}, &  1\leq j \leq m_1+\cdots+m_p; \\[1mm]
[t_2,x_i]=x_{i}, & 2 \leq i \leq n_1+1; \\[1mm]
[t_{j+2},x_{n_1+\cdots+n_j+i}]=x_{n_1+\cdots+n_j+i}, &  1 \leq j \leq k-1, \  2 \leq i \leq n_{j+1}+1  ; \\[1mm]
[t'_1,y_i]=y_{i}, & 1\leq i \leq m_1; \\[1mm]
[t'_{j+1},y_{m_1+\cdots+m_j+i}]=y_{m_1+\cdots+m_j+i}, &  1 \leq j \leq p-1, \  1 \leq i \leq m_{j+1}. \\[1mm]
\end{array}$$

Thus, the solvable Lie superalgebra that we are going to consider and denoted by $SN(n_1, \cdots,n_k,1 |$ $ m_1, \cdots, m_p)$ is  defined in a basis  $\{ x_1, \dots, x_{n_1+\cdots n_k+1}, $ $t_1,\dots,t_{k+1},$ $t'_1,\dots,t'_p, $ $y_1, \dots, y_{m_1+\cdots+m_p} \}$ by the only  non-zero bracket products:
$SN(n_1, \cdots,n_k,1 | m_1, \cdots, m_p):$
$$\left\{\begin{array}{ll}
[x_1,x_j]=
-[x_j,x_1]=
x_{j+1}, & 2\leq j \leq n_1; \\[1mm]
[x_1,x_{n_1+\cdots+n_j+i}]=
-[x_{n_1+\cdots+n_j+i},x_1]=
x_{n_1+\cdots+n_j+i+1}, & 1\leq j \leq k-1, \ 2 \leq i \leq n_{j+1};\\[1mm]
[x_1,y_j]=
-[y_j,x_1]=
y_{j+1}, & 1\leq j \leq m_1-1; \\[1mm]
[x_1,y_{m_1+\cdots+m_j+i}]=
-[y_{m_1+\cdots+m_j+i},x_1]=
y_{m_1+\cdots+m_j+i+1}, & 1\leq j \leq p-1, \ 1 \leq i \leq m_{j+1}-1;\\[1mm]
[t_1,x_i]=
-[x_i,t_1]=
ix_{i}, & 1\leq i \leq  n_1+\cdots+n_k+1; \\[1mm]
[t_1,y_j]=
-[y_j,t_1]=
jy_{j}, &  1\leq j \leq m_1+\cdots+m_p; \\[1mm]
[t_2,x_i]=
-[x_i,t_2]=
x_{i}, & 2 \leq i \leq n_1+1; \\[1mm]
[t_{j+2},x_{n_1+\cdots+n_j+i}]=
-[x_{n_1+\cdots+n_j+i},t_{j+2}]=
x_{n_1+\cdots+n_j+i}, &  1 \leq j \leq k-1, \  2 \leq i \leq n_{j+1}+1  ; \\[1mm]
[t'_1,y_i]=
-[y_i,t'_1]=
y_{i}, & 1\leq i \leq m_1; \\[1mm]
[t'_{j+1},y_{m_1+\cdots+m_j+i}]=
-[y_{m_1+\cdots+m_j+i},t'_{j+1}]=
y_{m_1+\cdots+m_j+i}, &  1 \leq j \leq p-1, \  1 \leq i \leq m_{j+1};
\end{array}\right.$$

\noindent with $\{ x_1, \dots, x_{n_1+\cdots n_k +1},$ $ t_1,\dots,t_{k+1},$ $t'_1,\dots,t'_p\}$ even basis vectors and $\{ y_1, \dots, $ $y_{m_1+\cdots+m_p}\}$ odd basis vectors. Next, we show that this solvable Lie superalgebra is the unique with  maximal codimension of nilradical $N(n_1, \cdots,n_k,1 | m_1, \cdots, m_p)$.

\begin{thm} Let $L$ be a complex maximal-dimensional solvable Lie superalgebra with $L^2$ nilpotent and nilradical  isomorphic to $N(n_1, \cdots,n_k,1 | m_1, \cdots, m_p)$. Then there exists a basis, namely $\{ x_1, \dots, x_{n_1+\cdots n_k+1}, $ $t_1,\dots,t_{k+1},$ $t'_1,\dots,t'_p, $ $y_1, \dots, y_{m_1+\cdots+m_p} \}$ with $\{ x_1, \dots, x_{n_1+\cdots n_k +1},$ $ t_1,\dots,t_{k+1},$ $t'_1,\dots,t'_p\}$ even basis vectors and $\{ y_1, \dots, $ $y_{m_1+\cdots+m_p}\}$ odd basis vectors, in which $L$ is isomorphic to $SN(n_1, \cdots,n_k,1 | m_1, \cdots, m_p)$.
\end{thm}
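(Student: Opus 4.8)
The plan is to follow the strategy of Theorem \ref{solfil} verbatim, now for the general block structure. The first observation is that $N(n_1,\ldots,n_k,1\mid m_1,\ldots,m_p)$ contains no symmetric bracket products (no nonzero $[y_i,y_j]$ with $y_i,y_j$ odd), so it may be regarded as a $\mathbb{Z}_2$-graded nilpotent Lie algebra. Under the hypothesis that $L^2$ is nilpotent, the result of \cite{solvableSA} gives the semidirect decomposition $L=\mathfrak{t}\overrightarrow{\oplus}\mathfrak{n}$ with $\mathfrak{n}=N(n_1,\ldots,n_k,1\mid m_1,\ldots,m_p)$ and $\mathfrak{t}$ acting by outer non-nilpotent superderivations; since a maximal torus consists of commuting semisimple elements, which are even, one has $\mathfrak{t}\subset(\mathrm{Der}\,\mathfrak{n})_{\bar 0}$. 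I would then count the generators of $\mathfrak{n}$: modulo $[\mathfrak{n},\mathfrak{n}]$ these are $x_1$ together with the bottom vector of each of the $k$ even blocks and the bottom vector of each of the $p$ odd blocks, giving $k+1+p$ generators, which is exactly $\dim\mathfrak{t}$.

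Next I would invoke the Lie-algebra classification (the model-nilpotent analog of Theorem $3.2$ of \cite{UzMathJournal}) asserting that there is a unique maximal-dimensional solvable Lie algebra with nilradical $N(n_1,\ldots,n_k,1\mid m_1,\ldots,m_p)$, its codimension equalling the number of generators $k+1+p$. The even superderivations of $\mathfrak{n}$ are precisely the grading-preserving derivations of this $\mathbb{Z}_2$-graded Lie algebra, so the maximal torus of even superderivations coincides with the maximal torus of the underlying Lie algebra. As in the filiform case, an explicit change of basis in $\mathfrak{t}$ (analogous to $t_1=z_1+2z_2+z_3$) would identify this unique solvable Lie algebra with $SN(n_1,\ldots,n_k,1\mid m_1,\ldots,m_p)$; this reduces to routine diagonalization bookkeeping on the torus generators $t_i,t'_j$.

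It then remains to show that the $\mathbb{Z}_2$-graded structure contributes nothing beyond the Lie-algebra derivations, i.e. that every odd superderivation is a (grading-reversing) ordinary derivation and, in particular, is nilpotent, so that it cannot enlarge $\mathfrak{t}$. For an odd superderivation $D$ the only genuinely super condition is the symmetric one, $0=D([y_i,y_j])=[D(y_i),y_j]-[y_i,D(y_j)]$, since $[y_i,y_j]=0$. Because $D(x_1)$ is odd, $[D(x_1),y_j]=0$, and for any $y_i$ lying above the bottom of its block one propagates $D(y_i)=[x_1,D(y_{i-1})]\in\mathrm{Im}(\mathrm{ad}_{x_1})$, a subspace that excludes $x_1$; for a bottom vector $y_b$ the diagonal case $i=j=b$ gives $2\alpha^b_1\,[x_1,y_b]=0$, forcing the $x_1$-coefficient $\alpha^b_1=0$ whenever $y_b$ is not also the top of its block.

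The main obstacle is precisely this last step in the presence of singleton odd blocks ($m_j=1$), where $[x_1,y_b]=0$ and the diagonal identity is vacuous, so the $x_1$-component of $D(y_b)$ is a priori unconstrained; the same degeneracy already lurks in the filiform case when $m=1$. I expect to close this by combining the off-diagonal identities $\alpha^i_1[x_1,y_j]+\alpha^j_1[x_1,y_i]=0$ across blocks, and, more decisively, by observing that any such surviving odd superderivation squares into the nilradical and is therefore nilpotent: being non-semisimple it cannot belong to a maximal torus, so it does not increase the codimension of the nilradical. Hence the maximal-dimensional solvable extension is governed entirely by the even torus of dimension $k+1+p$, and $L\cong SN(n_1,\ldots,n_k,1\mid m_1,\ldots,m_p)$.
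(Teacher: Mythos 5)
Your proposal is correct and follows essentially the same route as the paper: regard $N(n_1,\ldots,n_k,1\mid m_1,\ldots,m_p)$ as a $\mathbb{Z}_2$-graded Lie algebra (no symmetric products), invoke the uniqueness result of \cite{UzMathJournal} for the maximal solvable Lie-algebra extension, identify it with $SN(n_1,\ldots,n_k,1\mid m_1,\ldots,m_p)$ by an explicit change of torus basis, and then show that the super structure adds nothing because odd superderivations reduce to grading-reversing Lie derivations. The one point where you genuinely depart from (and improve on) the paper's argument is the degenerate case you flag: for an odd generator $y_i$ spanning a singleton block ($m_{j+1}=1$), the paper's key computation $0=[D(y_i),y_i]-[y_i,D(y_i)]=2\alpha_1 y_{i+1}$ silently assumes $[x_1,y_i]=y_{i+1}\neq 0$, and is vacuous when the block has length one (the same degeneracy occurs in Theorem \ref{solfil} when $m=1$); the paper never addresses this. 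Your patch via off-diagonal identities only works if some odd block has length at least two, but your second argument closes the gap completely: an odd element $t$ of the complement satisfies $[t,t]\in\mathfrak{n}$, so $(\mathrm{ad}_t)^2=\tfrac12\,\mathrm{ad}_{[t,t]}$ is nilpotent on $\mathfrak{n}$, hence $\mathrm{ad}_t$ is nilpotent and $\mathfrak{n}+\mathbb{C}t$ would be a larger nilpotent ideal — so the complement is purely even and the codimension count reduces entirely to the Lie-algebra torus of dimension $k+1+p$. (An alternative elementary fix, worth noting, is that the mixed condition (\ref{odderL2})-type identity (\ref{odder2}) applied to the pair $\{x_2,y_i\}$ gives $0=[x_2,D(y_i)]=-\alpha_1x_3$, forcing $\alpha_1=0$ regardless of the odd block structure.) So your proof is not only valid but more careful than the paper's on this point.
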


\begin{proof} On account of the lack of symmetric bracket products, $N(n_1, \cdots,n_k,1 | m_1, \cdots, m_p)$  can be regarded as both  a nilpotent Lie superalgebra and a nilpotent $\mathbb{Z}_2$-graded Lie algebra. Likewise, under the condition of being $L^2$ nilpotent the techniques used in Lie superalgebras are  similar to the ones used in Lie algebras, that is solvable extension by means of (super)derivations of nilradical.

Let us consider now $N(n_1, \cdots,n_k,1 | m_1, \cdots, m_p)$ as a Lie algebra, the results of \cite{UzMathJournal} allow us to assert that there is only one solvable Lie algebra with maximal codimension of nilradical, which can be expressed in a suitable basis $\{ x_1, \dots, x_{n_1+\cdots n_k+1}, $ $z_1,\dots,z_{k+1},$ $z'_1,\dots,z'_p, $ $y_1, \dots, y_{m_1+\cdots+m_p} \}$, by the only non zero bracket products:
$$\left\{\begin{array}{ll}
[x_1,x_j]=
-[x_j,x_1]=
x_{j+1}, & 2\leq j \leq n_1; \\[1mm]
[x_1,x_{n_1+\cdots+n_j+i}]=
-[x_{n_1+\cdots+n_j+i},x_1]=
x_{n_1+\cdots+n_j+i+1}, & 1\leq j \leq k-1, \ 2 \leq i \leq n_{j+1};\\[1mm]
[x_1,y_j]=
-[y_j,x_1]=
y_{j+1}, & 1\leq j \leq m_1-1; \\[1mm]
[x_1,y_{m_1+\cdots+m_j+i}]=
-[y_{m_1+\cdots+m_j+i},x_1]=
y_{m_1+\cdots+m_j+i+1}, & 1\leq j \leq p-1, \ 1 \leq i \leq m_{j+1}-1;\\[1mm]
[z_1,x_1]=
-[x_1,z_1]=
x_1; &\\[1mm]
[z_1,x_i]=
-[x_i,z_1]=
(i-2)x_{i}, & 3\leq i \leq n_1+1; \\[1mm]
[z_1,x_{n_1+\cdots+n_j+i}]=
-[x_{n_1+\cdots+n_j+i},z_1]=
(i-2)x_{n_1+\cdots+n_j+i}, & 1\leq j \leq k-1, \ 3 \leq i \leq n_{j+1}+1;\\[1mm]
[z_1,y_i]=
-[y_i,z_1]=
(i-1)y_{i}, &  2\leq i \leq m_1; \\[1mm]
[z_1,y_{m_1+\cdots+m_j+i}]=
-[y_{m_1+\cdots+m_j+i},z_1]=
(i-1)y_{m_1+\cdots+m_j+i}, & 1\leq j \leq p-1, \ 2 \leq i \leq m_{j+1};\\[1mm]
[z_2,x_i]=
-[x_i,z_2]=
x_{i}, & 2 \leq i \leq n_1+1; \\[1mm]
[z_{j+2},x_{n_1+\cdots+n_j+i}]=
-[x_{n_1+\cdots+n_j+i},z_{j+2}]=
x_{n_1+\cdots+n_j+i}, &  1 \leq j \leq k-1, \  2 \leq i \leq n_{j+1}+1;\\[1mm]
[z'_1,y_i]=
-[y_i,z'_1]=
y_{i}, & 1\leq i \leq m_1; \\[1mm]
[z'_{j+1},y_{m_1+\cdots+m_j+i}]=
-[y_{m_1+\cdots+m_j+i},z'_{j+1}]=
y_{m_1+\cdots+m_j+i}, &  1 \leq j \leq p-1, \  1 \leq i \leq m_{j+1};  \\[1mm]
\end{array}\right.$$

\noindent and the isomorphism defined by

$t_1=z_1+2z_2+\left(\displaystyle \sum_{j=1}^{k-1} (n_1+\cdots+n_j+2)z_{j+2}\right)+z'_1+\left(\displaystyle \sum_{j=1}^{p-1} (m_1+\cdots+m_j+1)z'_{j+1}\right),$

$t_i=z_i, \ 2\leq i \leq k+1 \quad \mbox{ and }\quad  t'_i=z'_i, \ 1\leq i \leq p,$

\

\noindent shows that this Lie algebra is isomorphic to the $\mathbb{Z}_2$-graded Lie algebra $SN(n_1, \cdots,n_k,1 | m_1, \cdots, m_p)$.

Let us consider now $N(n_1, \cdots,n_k,1 | m_1, \cdots, m_p)$ as a Lie  superalgebra.
Analogously as it was seen along the proof of Theorem \ref{solfil}, the even superderivations
are in particular Lie derivations, and for odd superderivations the only condition different from Lie derivation condition is exactly equation (\ref{odder3}):
$$D([y_i,y_j])=[D(y_i),y_j]-[y_i,D(y_j)]$$

 On account of the law of $N(n_1, \cdots,n_k,1 | m_1, \cdots, m_p)$ it can be easily seen that the only possibility for having at least one non-null term in the equation (\ref{odder3}) corresponds with the existence of $y_i$ such that $x_1 \in D(y_i)$. Next, we explore such possibilities. For $y_i$ different from the odd generator vectors, i.e. $i \notin \{1, m_1+1, m_1+m_2+1, \dots, m_1+\cdots+m_{p-1}+1 \}$, we get
 $$D(y_i)=D([x_1,y_{i-1}])=[D(x_1),y_i]+[x_1,D(y_{i-1})]=[x_1,D(y_{i-1})] $$

As $[x_1,D(y_{i-1})] \in span\{ x_3, \dots,x_n\}$ we can exclude $i$ for $i \notin \{1, m_1+1,  \dots, m_1+\cdots+m_{p-1}+1 \}$. Suppose now that there exists $i$, $i \in \{1, m_1+1, , \dots, m_1+\cdots+m_{p-1}+1 \}$ such that  $$D(y_i)=\alpha_1 x_1+\displaystyle \sum_{k=2}^{n_1+\cdots+n_k +1} \alpha_k x_k, \qquad \mbox{ with } \alpha_1 \neq 0$$
From equation (\ref{odder3}) we get in particular
$$D([y_i,y_i])=0=[D(y_i),y_i]-[y_i,D(y_i)]=2\alpha_1 y_{i+1}$$
and then $\alpha_1=0$. Therefore the equation  (\ref{odder3}) vanishes over
$N(n_1, \cdots,n_k,1 | m_1, \cdots, m_p)$ and consequently all the odd superderivations of $N(n_1, \cdots,n_k,1 | m_1, \cdots, m_p)$ are  in particular Lie derivations of itself regarded as  $\mathbb{Z}_2$-graded Lie algebra.

Thus,  all the superderivations of the Lie superalgebra $N(n_1, \cdots,n_k,1 | m_1, \cdots, m_p)$ are particular cases of Lie derivations of itself regarded as  $\mathbb{Z}_2$-graded Lie algebra. Therefore, we can assert that $SN(n_1, \cdots,n_k,1 | m_1, \cdots, m_p)$ is not only the  unique maximal-dimensional solvable Lie algebra with  nilradical the Lie algebra $N(n_1, \cdots,n_k,1 | m_1, \cdots, m_p)$, but also is the  unique maximal-dimensional solvable Lie superalgebra $L$ with $L^2$ nilpotent and  nilradical the model nilpotent Lie superalgebra.
\end{proof}

\section{Maximal-dimensional solvable Leibniz superalgebras with non-Lie filiform nilradical.}

In this section, we consider Leibniz superalgebra whose nilradical is isomorphic to the filiform (non-Lie) Leibniz superalgebra. This filiform Leibniz superalgebra (denoted by $LP^{n,m}$) can be expressed by the only non-null bracket products that follow:
$$\left\{\begin{array}{ll}
         [x_i,x_1]=x_{i+1},& 2\leq i \leq n-1\\[1mm]
          [y_j,x_1]=y_{j+1},& 1\leq j \leq m-1
       \end{array}\right.$$

\begin{thm} \label{solLeib} Let $L$ be a complex maximal-dimensional solvable Leibniz superalgebra with $L^2$ nilpotent  and  with nilradical isomorphic to $LP^{n,m}$. Then there exists a basis, namely $\{ x_1, \dots, x_n, $ $t_1,t_2,t_3,$ $y_1, \dots, y_m \}$ with $\{ x_1, \dots, x_n, $ $t_1,t_2,t_3\}$  a basis of $L_{\overline{0}}$ and $\{ y_1, \dots, y_m\}$  a basis of $L_{\overline{1}}$, in which $L$ is isomorphic to  the following solvable Leibniz superalgebra:
	$$SLP^{n,m}:\left\{\begin{array}{ll}
	[x_i,x_1]=x_{i+1},&2\leq i\leq n-1;\\{}
	[y_j,x_1]=y_{j+1},&1\leq j\leq m-1;\\{}
	[t_1,x_1]=-x_1,&\\{}
	[x_1,t_1]=x_1,&\\{}
	[x_i,t_1]=(i-2)x_i,&3\leq i\leq n;\\{}
	[y_j,t_1]=(j-1)y_j,&2\leq j\leq m;\\{}
	[x_i,t_2]=x_i,&2\leq i\leq n;\\{}
	[y_j,t_3]=y_j,&1\leq j\leq m;
	\end{array}\right.$$
	where the omitted products are zero.
\end{thm}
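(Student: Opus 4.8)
The plan is to reproduce, in the Leibniz (non-Lie) setting, the two-stage argument used for Theorem~\ref{solfil} and its model-nilpotent analogue. First I would observe that in $LP^{n,m}$ the right-hand argument of every non-zero product is the even generator $x_1$, so that all products with an odd right factor (all $[x_i,y_j]$ and all $[y_i,y_j]$) vanish. Since the sign $(-1)^{|y||z|}$ in the super Leibniz identity is non-trivial only when the two arguments $y,z$ are both odd, this observation shows that the super Leibniz identity on $LP^{n,m}$ coincides with the ordinary $\ZZ_2$-graded Leibniz identity. Hence $LP^{n,m}$ can be regarded simultaneously as a nilpotent Leibniz superalgebra and as a nilpotent $\ZZ_2$-graded Leibniz algebra, and under the hypothesis that $L^2$ is nilpotent the solvable extension is obtained by adjoining outer (super)derivations of the nilradical, exactly as in the Lie case treated via \cite{solvableSA}.

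Next I would invoke the Leibniz analogue of the classification in \cite{UzMathJournal} provided by Mubarakzjanov's method extended to Leibniz algebras (cf.~\cite{metodoLeib}), applied to $LP^{n,m}$ viewed as a graded Leibniz algebra. The nilradical has exactly three generators, namely $x_1$, $x_2$ and $y_1$, so the maximal codimension of the nilradical equals $3$; this yields a \emph{unique} maximal-dimensional solvable Leibniz algebra, presented in a basis $\{x_1,\dots,x_n, z_1,z_2,z_3, y_1,\dots,y_m\}$ on which $z_1,z_2,z_3$ act by the expected diagonal (right) multiplications together with the Leibniz-specific bracket $[z_1,x_1]=-[x_1,z_1]=x_1$. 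A change of basis $t_1 = z_1+2z_2+z_3$, $t_2=z_2$, $t_3=z_3$, completely analogous to the one used in Theorem~\ref{solfil}, then identifies this algebra with $SLP^{n,m}$.

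It then remains to verify that passing from the graded Leibniz algebra to the genuine superalgebra produces no new superderivations. Even superderivations are, as before, derivations of the $\ZZ_2$-graded Leibniz algebra, and for an odd superderivation $D$ the only condition differing from the graded Leibniz derivation condition is the analogue of equation~(\ref{odder3}), which in the Leibniz case (sign governed by the right factor, see \cite{null-filiform}) reads $D([y_i,y_j]) = -[D(y_i),y_j] + [y_i,D(y_j)]$. By the law of $LP^{n,m}$, this can be non-trivial only if some odd generator $y_i$ satisfies $x_1 \in D(y_i)$; writing $D(y_1)=\alpha_1 x_1+\sum_{k\geq 2}\alpha_k x_k$ and evaluating on $[y_1,y_1]=0$ gives $0=-[D(y_1),y_1]+[y_1,D(y_1)]=\alpha_1 y_2$, since $[x_1,y_1]=0$ while $[y_1,x_1]=y_2$, forcing $\alpha_1=0$. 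For the non-generator indices the recursion $D(y_i)=D([y_{i-1},x_1])=[D(y_{i-1}),x_1]$ lands in $\mathrm{span}\{x_3,\dots,x_n\}$, so $x_1$ never appears. Thus every superderivation of $LP^{n,m}$ is a derivation of the associated graded Leibniz algebra, and uniqueness for Leibniz superalgebras follows from the uniqueness for Leibniz algebras.

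The hard part will be the careful bookkeeping of the non-symmetric Leibniz brackets and the placement of the sign $(-1)^{|D||y|}$ in the derivation condition: unlike the Lie case, where the sign sits on the term $[y_i,D(y_j)]$ and the analogous computation produces $2\alpha_1 y_2$, in the Leibniz setting the sign attaches to $[D(y_i),y_j]$, and the asymmetry $[x_1,y_1]=0\neq[y_1,x_1]$ means only one term survives, yielding $\alpha_1 y_2$. One must confirm that this single surviving term is still enough to force the obstruction to vanish, and that no residual odd--odd product reintroduces freedom. I also expect a minor subtlety in checking that the cited maximal solvable Leibniz construction applies verbatim to $LP^{n,m}$ and reproduces precisely the bracket structure recorded in $SLP^{n,m}$, including the pair $[t_1,x_1]=-x_1$ and $[x_1,t_1]=x_1$ that reflects the Leibniz rather than Lie nature of the extension.
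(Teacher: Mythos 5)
Your overall strategy coincides with the paper's: regard $LP^{n,m}$ as a $\ZZ_2$-graded Leibniz algebra, quote the classification of maximal solvable Leibniz extensions of a given nilradical, and then show that every superderivation of $LP^{n,m}$ is in fact a graded Leibniz derivation, so that uniqueness transfers to the super setting. Your second stage is essentially the paper's argument and is correct, including the key sign observation that in the Leibniz case $[y_1,y_1]=0$ yields $\alpha_1 y_2=0$ (one surviving term) rather than $2\alpha_1 y_2=0$. The only imprecision there is your claim that the odd--odd equation is the \emph{only} condition differing from the graded derivation condition: equation (\ref{odderL2}) also carries a sign and must be checked to vanish (the paper checks both (\ref{odderL2}) and (\ref{odderL4})); on $LP^{n,m}$ it reduces to the same constraint $x_1\notin D(y_j)$, so your computation covers it, but implicitly.

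The genuine gap is in the first stage. The intermediate algebra you write down is not a Leibniz algebra: with $[z_1,x_1]=x_1$, $[x_1,z_1]=-x_1$ and the diagonal right actions $[x_i,z_1]=(i-2)x_i$, the Leibniz identity on the triple $(x_2,z_1,x_1)$ gives $[x_2,[z_1,x_1]]=x_3$ while $[[x_2,z_1],x_1]-[[x_2,x_1],z_1]=-x_3$, a contradiction. The identity forces the opposite sign, $[t_1,x_1]=-x_1$ and $[x_1,t_1]=x_1$, which is exactly the table in the theorem. This is not a cosmetic slip, because the classification you invoke (Theorem 4 of \cite{Abdurasulov} in the paper; note \cite{metodoLeib} treats only the null-filiform nilradical) does not hand over the full multiplication table: it leaves the left products $[t_k,x_i]$, $[t_k,y_j]$ undetermined and contains parameters $b_1,b_2,b_3\in\{0,1\}$. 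The paper must compute those left products inductively from the Leibniz identity and then evaluate the identity on the triples $\{x_2,t_1,x_1\}$, $\{t_2,t_1,x_3\}$ and $\{t_3,x_1,y_1\}$ to force $b_1=0$, $b_2=1$, $b_3=1$; your proposal treats all of this as automatic, and the wrong sign you assert is precisely what this omitted step exists to rule out. Relatedly, the change of basis $t_1=z_1+2z_2+z_3$ is a Lie-case artifact and should be dropped: in the Leibniz case the normal form $SLP^{n,m}$ keeps the weights $(i-2)$ for the right action of $t_1$ (and has almost all left products by the $t_k$ equal to zero), and applying your substitution would change those right-action weights to $i$ while leaving the left action untouched, so it does not produce the stated table; the correct identification is essentially the identity map once the parameters $b_i$ have been determined.
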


\begin{proof} Note that $LP^{n,m}$ can be considered as $\mathbb{Z}_2$-graded nilpotent Leibniz algebra. Similar to  Lie case, we can use the result  of paper  \cite{Abdurasulov}, which give the description of solvable Leibniz algebras with maximal codimension of nilradical (maximal dimension of the complementary space to nilradical). This maximal codimension is equal to the number of generators of the nilradical, 3 in our case.
Then, using Theorem $4$ of \cite{Abdurasulov} we have the following products:
	 $$\begin{array}{ll}
	 [x_i,x_1]=x_{i+1},&2\leq i\leq n-1, \\{}
	 [y_j,x_1]=y_{j+1},&1\leq j\leq m-1, \\{}
	 [t_1,x_1]=(b_1-1)x_1,&\\{}
	 [t_2,x_2]=(b_2-1)x_2,&\\{}
	 [t_3,y_1]=(b_3-1)y_1,& \\{}
	 [x_1,t_1]=x_1,&\\{}
	 [x_i,t_1]=(i-2)x_i,&3\leq i\leq n,\\{}
	 [y_j,t_1]=(j-1)y_j,&2\leq j\leq m,\\{}
	 [x_i,t_2]=x_i,&2\leq i\leq n,\\{}
	 [y_j,t_3]=y_j,&1\leq j\leq m,
 \end{array}$$
with $b_i\in \{0,1\}$, $1\leq i\leq 3.$ Only rest to determine the following products $[t_k,x_i],$ with $3\leq i\leq n$, $[t_k,y_j],$ with $2\leq j\leq m$ and $1\leq k\leq 3$. Using the Leibniz identity and the induction method we derive the remaining products of $SLP^{n,m}$:
$$\begin{array}{lll}
[t_1,x_i]=[t_1,y_j]=0,& &2\leq i\leq n,\ 1\leq j\leq m,\\{}
[t_2,x_i]=(b_2-1)x_i,&[t_2,y_j]=0,&3\leq i\leq n,\ 2\leq j\leq m,\\{}
[t_3,x_i]=0,&[t_3,y_j]=(b_3-1)y_j,&3\leq i\leq n,\ 2\leq j\leq m.\\{}

\end{array}$$

Finally, from Leibniz identity for the triples $\{x_2,t_1,x_1\},$ $\{t_2,t_1,x_3\}$ and $\{t_3,x_1,y_1\}$ we obtain $b_1=0,$ $b_2=1$ and $b_3=1,$ respectively.

Now we consider $LP^{n,m}$ as a Leibniz superalgebra. Recall that $d$ is a Leibniz superderivation on $LP^{n,m}$ if $d$ verifies the condition:
$$d([x,y])=(-1)^{|d||y|}[d(x),y]+[x,d(y)]$$

Analogously to Lie superalgebra, the even superderivations of $LP^{n,m}$  are in particular Leibniz derivations verifying that $d(LP_{\overline{0}}^{n,m})\subset LP_{\overline{0}}^{n,m}$ and $d(LP_{\overline{1}}^{n,m})\subset LP_{\overline{1}}^{n,m}$. On the other hand, the odd superderivations $d$ of $LP^{n,m}$ verify the following:
\begin{eqnarray}
\label{odderL1}d([x_i,x_j])=[d(x_i),x_j]+[x_i,d(x_j)],  & x_i,x_j \in LP^{n,m}_{\overline{0}};\\[1mm]
\label{odderL2}d([x_i,y_j])=-[d(x_i),y_j]+[x_i,d(y_j)],  & x_i \in LP^{n,m}_{\overline{0}},  y_j \in LP^{n,m}_{\overline{1}};\\[1mm]
\label{odderL3}d([y_j,x_i])=[d(y_j),x_i]+[y_j,d(x_i)],  & x_i \in LP^{n,m}_{\overline{0}},  y_j \in LP^{n,m}_{\overline{1}};\\[1mm]
\label{odderL4}d([y_i,y_j])=-[d(y_i),y_j]+[y_i,d(y_j)],  & y_i,y_j \in LP^{n,m}_{\overline{1}}.
\end{eqnarray}

Let $d$ be an odd superderivation. Then, we have that
$$d(x_1)=\sum_{k=1}^m a_k y_k,\quad d(x_2)=\sum_{k=1}^m b_k y_k,\quad d(y_1)=\sum_{k=1}^n c_k x_k. $$

Using the equation $(\ref{odderL1})$ for the pair $[x_1,x_1]$ we get $a_i=0$ with $1\leq i\leq m-1$ and from $(\ref{odderL4})$ for the pair $[y_1,y_1]$ we obtain $c_1=0.$ From the equations $(\ref{odderL1})$ and $(\ref{odderL3})$ we compute $d(x_i)$ with $3\leq i \leq n$ and $d(y_j)$ for $2\leq j\leq m.$ Thus, we have:
$$d(x_1)=a_m y_m, \quad d(x_i)=\displaystyle\sum_{k=i-1}^m b_{k-i+2}y_k, \quad 2\leq i\leq n, \quad
d(y_j)=\displaystyle\sum_{k=j+1}^n c_{k-j+1} x_k, \quad 1\leq j\leq m.$$

Finally, from the equation $(\ref{odderL1})$ for the pair $[x_n,x_1]$ we have that $b_k=0$ for $1\leq k\leq {m-n+2}$ if $m\geq n-1.$

It is easy to prove that all odd derivations are Leibniz derivations because the equations $(\ref{odderL2})$ and $(\ref{odderL4})$ vanish.

The equations (\ref{odderL1}) and  (\ref{odderL3})  correspond with Leibniz derivation condition. Then we can reason as in Theorem \ref{solfil} and we get that all odd superderivations of $LP^{n,m}$ are in particular derivations of the $\mathbb{Z}_2$-graded Leibniz algebra $LP^{n,m}.$ Then we can assert that $SLP^{n,m}$ is the  unique maximal-dimensional solvable Leibniz superalgebra $L$ with $L^2$ nilpotent and nilradical the Leibniz superalgebra $LP^{n,m}.$
\end{proof}

\section{Maximal-dimensional solvable Leibniz superalgebras with model nilpotent non-Lie nilradical}

In this section, we consider as nilradical the equivalent of the model nilpotent Lie superalgebra into (non-Lie) Leibniz superalgebras.  This Leibniz superalgebra denoted by $NP(n_1, \cdots,n_k,1 |$ $ m_1, \cdots, m_p)$  can be expressed by the only non-null bracket products that follow:
$$\left\{\begin{array}{ll}
[x_j,x_1]=x_{j+1}, & 2\leq j \leq n_1, \\[1mm]
[x_{n_1+\cdots+n_j+i},x_1]=x_{n_1+\cdots+n_j+i+1}, & 1\leq j \leq k-1, \ 2 \leq i \leq n_{j+1},\\[1mm]
[y_j,x_1]=y_{j+1}, & 1\leq j \leq m_1-1, \\[1mm]
[y_{m_1+\cdots+m_j+i},x_1]=y_{m_1+\cdots+m_j+i+1}, & 1\leq j \leq p-1, \ 1 \leq i \leq m_{j+1}-1.\\[1mm]
\end{array}\right.$$

\begin{thm}
Let $L$ be a complex maximal-dimensional solvable Leibniz superalgebra with $L^2$ nilpotent and nilradical  isomorphic to $NP(n_1, \cdots,n_k,1 | m_1, \cdots, m_p)$. Then there exists a basis, namely $\{ x_1, \dots, x_{n_1+\cdots n_k+1}, $ $t_1,\dots,t_{k+1},$ $t'_1,\dots,t'_p, $ $y_1, \dots, y_{m_1+\cdots+m_p} \}$ with $\{ x_1, \dots,$ $ x_{n_1+\cdots n_k +1},$ $ t_1,\dots,t_{k+1},$ $t'_1,\dots,t'_p\}$ even basis vectors and $\{ y_1, \dots, $ $y_{m_1+\cdots+m_p}\}$ odd basis vectors, in which $L$ is isomorphic to $SNP(n_1, \cdots,n_k,1 | m_1, \cdots, m_p)$ given by:

$$\left\{\begin{array}{ll}
[x_j,x_1]=x_{j+1}, & 2\leq j \leq n_1; \\[1mm]
[x_{n_1+\cdots+n_j+i},x_1]=x_{n_1+\cdots+n_j+i+1}, & 1\leq j \leq k-1, \ 2 \leq i \leq n_{j+1};\\[1mm]
[y_j,x_1]=y_{j+1}, & 1\leq j \leq m_1-1; \\[1mm]
[y_{m_1+\cdots+m_j+i},x_1]=y_{m_1+\cdots+m_j+i+1}, & 1\leq j \leq p-1, \ 1 \leq i \leq m_{j+1}-1;\\[1mm]
[t_1,x_1]=-x_1,&\\[1mm]
[x_1,t_1]=x_1,&\\[1mm]
[x_i,t_1]=(i-2)x_i,& 3\leq i\leq n_1+1;\\[1mm]
[x_{n_1+\dots+n_j+i},t_1]=(i-2)x_{n_1+\dots+n_j+i},&1\leq j\leq k-1,\ 3\leq i\leq n_{j+1}+1;\\[1mm]
[y_j,t_1]=(i-1)y_j,&2\leq j\leq m_1;\\[1mm]
[y_{m_1+\dots+m_j+i},t_1]=(i-1)y_{m_1+\dots+m_j+i}, &1\leq j\leq p-1,\ 2\leq i\leq m_{j+1},\\[1mm]
[x_i,t_2]=x_i,&2\leq i\leq n_1+1;\\[1mm]
[x_{n_1+\dots+n_j+i},t_{j+2}]=x_{n_1+\dots+n_j+i},&1\leq j\leq k-1,\ 2\leq i\leq n_{j+1}+1;\\[1mm]
[y_j,t'_1]=y_j,&1\leq j\leq m_1;\\[1mm]
[y_{m_1+\cdots+m_j+i},t'_{j+1}]=y_{m_1+\cdots+m_j+i},& 1\leq j \leq p-1, \ 1 \leq i \leq m_{j+1};
\end{array}\right.$$
where the omitted products are zero.
\end{thm}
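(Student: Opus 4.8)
The plan is to reproduce the two-stage argument of Theorem \ref{solLeib}, now carried out block-by-block for the model nilpotent nilradical. The decisive structural fact is that $NP(n_1, \cdots, n_k, 1 | m_1, \cdots, m_p)$ contains no symmetric bracket products, so it may be regarded simultaneously as a nilpotent Leibniz superalgebra and as a nilpotent $\mathbb{Z}_2$-graded Leibniz algebra; and, under the hypothesis that $L^2$ is nilpotent, the solvable-extension techniques coincide with those of the non-graded Leibniz setting.

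First I would regard $NP(n_1, \cdots, n_k, 1 | m_1, \cdots, m_p)$ as a $\mathbb{Z}_2$-graded Leibniz algebra and invoke Theorem $4$ of \cite{Abdurasulov}, which furnishes the unique maximal-dimensional solvable Leibniz algebra with this nilradical, the maximal codimension equalling the number of generators of the nilradical, namely $k+1+p$. Exactly as in Theorem \ref{solLeib}, this description arrives with undetermined parameters $b_i \in \{0,1\}$, and the missing products $[t_k,x_i]$, $[t_k,y_j]$ are recovered by induction through the Leibniz identity. The parameters are then fixed by testing the Leibniz identity on suitable triples in each block --- the analogues of $\{x_2,t_1,x_1\}$, $\{t_2,t_1,x_3\}$ and $\{t_3,x_1,y_1\}$ --- which force precisely the values yielding $SNP(n_1, \cdots, n_k, 1 | m_1, \cdots, m_p)$ viewed as a $\mathbb{Z}_2$-graded Leibniz algebra.

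Second, I would treat $NP(n_1, \cdots, n_k, 1 | m_1, \cdots, m_p)$ as a genuine Leibniz superalgebra and show that each superderivation is already a derivation of the underlying graded Leibniz algebra. Even superderivations preserve the grading and are automatically Leibniz derivations; for an odd superderivation $d$, conditions \eqref{odderL1} and \eqref{odderL3} are exactly the Leibniz derivation condition, so only \eqref{odderL2} and \eqref{odderL4} require inspection. As before, a nonzero contribution can occur only if $x_1 \in d(y_i)$ for some odd \emph{generator} $y_i$, i.e. $i \in \{1, m_1+1, \dots, m_1+\cdots+m_{p-1}+1\}$, since any non-generator $y_i=[y_{i-1},x_1]$ satisfies $d(y_i)=[d(y_{i-1}),x_1] \in span\{x_3,\dots\}$. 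Writing $d(y_i)=\alpha_1 x_1+\sum_{k\geq 2}\alpha_k x_k$ with $\alpha_1\neq 0$ and evaluating \eqref{odderL4} on the pair $[y_i,y_i]$ gives $0=2\alpha_1 y_{i+1}$, forcing $\alpha_1=0$; hence \eqref{odderL2} and \eqref{odderL4} vanish identically.

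Consequently every superderivation reduces to a derivation of the associated $\mathbb{Z}_2$-graded Leibniz algebra, so the superalgebra classification coincides with the algebra one and $L\cong SNP(n_1,\cdots,n_k,1|m_1,\cdots,m_p)$. The principal obstacle is the bookkeeping of the first stage: tracking the parameters $b_i$ and the inductively determined products consistently across the several even blocks (indexed by $1\leq j\leq k-1$) and odd blocks (indexed by $1\leq j\leq p-1$), and confirming that a single well-chosen Leibniz-identity triple per block pins down each $b_i$. The second stage is a block-indexed repetition of the filiform computation and presents no new difficulty.
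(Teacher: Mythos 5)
Your proposal follows essentially the same two-stage argument as the paper's proof: first treat $NP(n_1,\cdots,n_k,1\,|\,m_1,\cdots,m_p)$ as a $\mathbb{Z}_2$-graded Leibniz algebra and apply Theorem 4 of \cite{Abdurasulov} together with Leibniz-identity triples to pin down the parameters, then show every odd superderivation is already a graded Leibniz derivation because the $x_1$-coefficient of $d(y_i)$ on the odd generators is forced to vanish, so equations \eqref{odderL2} and \eqref{odderL4} hold identically. One cosmetic correction: since in the non-Lie nilradical $NP$ all left products $[x_k,y_i]$ vanish, evaluating \eqref{odderL4} on the pair $[y_i,y_i]$ yields $0=\alpha_1 y_{i+1}$ rather than $0=2\alpha_1 y_{i+1}$ (the factor $2$ belongs to the Lie case of Theorem \ref{solfil}), but the conclusion $\alpha_1=0$ is unaffected.
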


\begin{proof}
	
Similar to Theorem \ref{solLeib}, we consider the superalgebra $NP(n_1, \cdots,n_k,1 | m_1, \cdots, m_p)$ as a nilpotent Leibniz algebra (a $\mathbb{Z}_2$-graded Leibniz algebra). Thus, we can use the results of the paper \cite{Abdurasulov} and we obtain the following products:
$$\begin{array}{ll}
[t_1,x_1]=(b_1-1)x_1,&\\[1mm]
[x_1,t_1]=x_1,&\\[1mm]
[x_i,t_1]=(i-2)x_i,& 3\leq i\leq n_1+1,\\[1mm]
[x_{n_1+\dots+n_j+i},t_1]=(i-2)x_{n_1+\dots+n_j+i},&1\leq j\leq k-1,\ 3\leq i\leq n_{j+1}+1,\\[1mm]
[y_j,t_1]=(i-1)y_j,&2\leq j\leq m_1,\\[1mm]
[y_{m_1+\dots+m_j+i},t_1]=(i-1)y_{m_1+\dots+m_j+i}, &1\leq j\leq p-1,\ 2\leq i\leq m_{j+1},\\[1mm]
[t_2,x_2]=(b_2-1)x_2, &\\[1mm]
[x_2,t_2]=x_2,&\\[1mm]
[t_{j+2},x_{n_1+\dots+n_j+2}]=(b_{j+2}-1)x_{n_1+\dots+n_j+2},&\\[1mm]
[x_{n_1+\dots+n_j+2},t_{j+2}]=x_{n_1+\dots+n_j+2},&\\[1mm]
[y_1,t'_1]=y_1,&\\[1mm]
[t'_1,y_1]=(b'_1-1)y_1,&\\[1mm]
[t'_{j+1},y_{m_1+\cdots+m_j+1}]=(b'_{j+1}-1)y_{m_1+\cdots+m_j+1},&\\[1mm]
[y_{m_1+\cdots+m_j+1},t'_{j+1}]=y_{m_1+\cdots+m_j+1}&
\end{array}$$

Similar to previous case and applying Leibniz identity we obtain the remaining products and 
$b_1=1,$ $b_i=0$ with $2\leq i\leq k+1$ and $b'_j=0$ with $1\leq j\leq p.$ Thus,
we get $SNP(n_1, \cdots,n_k,1 | m_1, \cdots, m_p)$.

Now, we consider $NP(n_1, \cdots,n_k,1 | m_1, \cdots, m_p)$ as a superalgebra. The even superderivations of this superalgebra are in particular Leibniz derivations. Then, we go to prove that the odd superderivations are also Leibniz derivations. For that purpose, it is sufficient to verify that the equations $(\ref{odderL2})$ and $(\ref{odderL4})$ vanish.

Let $d$ be an odd superderivation. Then, we have
$$d(x_1)=\displaystyle\sum_{k=1}^{m_1+\dots+m_p} a_k y_k, \ \ d(x_2)=\displaystyle\sum_{k=1}^{m_1+\dots+m_p} b_k y_k, \ \  d(x_{n_1+\dots+n_j+2})=\displaystyle\sum_{k=1}^{m_1+\dots+m_p} \alpha_{k\, j} y_k, \ 1\leq j\leq k-1,$$
$$d(y_1)=\displaystyle\sum_{t=1}^{n_1+\dots+n_k+1} c_t x_t, \quad d(y_{m_1+\dots+m_j+1})=\displaystyle\sum_{t=1}^{n_1+\dots+n_k+1} \beta_{t\, j} x_t, \quad 1\leq j\leq p.$$

From the equation $(\ref{odderL1})$  for the pair $[x_1,x_1]$ we have that $$d(x_1)=a_{m_1}y_{m_1}+\dots+a_{m_1+\dots+m_p}y_{m_1+\dots+m_p}$$
and for the eqution $(\ref{odderL4})$ for the pair $[y_1,y_1]$ we get to $c_1=0.$ Analogously, if we consider  the pairs $[y_{m_1+\dots+m_j+1},y_{m_1+\dots+m_j+1}]$ with $1\leq j\leq p-1$ we have that $\beta_{1\, j}=0$. Thus, it proves that the equation $(\ref{odderL4})$ is always zero.

On the other hand, we put the equation $(\ref{odderL2})$. The produts $[x_i,d(y_j)]$ are always zero because of $c_1=\beta_{1\, j}=0$ for $1\leq j\leq p-1$ in $d(y_j)$. The other products are trivially zero.

Finally, we conclude that the odd superderivations are in particular Leibniz derivations. We can assert that $SNP(n_1, \cdots,n_k,1 | m_1, \cdots, m_p)$ is the  unique maximal-dimensional solvable Leibniz superalgebra $L$ with $L^2$ nilpotent and  nilradical the model nilpotent Leibniz superalgebra $NP(n_1, \cdots,n_k,1 | m_1, \cdots, m_p)$.

\end{proof}

\section{Superderivations of  the maximal-dimensional solvable Lie and Leibniz superalgebras}

In this section we establish that the spaces of superderivations for the superalgebras obtained in prevous sections (that is, $SL^{n,m}$, $SN(n_1, \cdots,n_k,1 | m_1, \cdots, m_p), SLP^{n,m}, SNP(n_1, \cdots,n_k,1 | m_1, $ $\cdots, m_p)$ consist of inner superderivations. These results extend analogously results for similar Lie and Leibniz algebras.

\begin{thm} \label{thm71} Any superderivation on the Lie superalgebra $SL^{n,m}$ is inner.
\end{thm}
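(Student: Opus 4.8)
The plan is to prove the stronger statement $Der(SL^{n,m}) = ad(SL^{n,m})$ by normalizing an arbitrary homogeneous superderivation modulo inner ones until it vanishes. First I would record two structural facts. The center of $SL^{n,m}$ is trivial: any element commuting with the diagonal operators $ad_{t_1},ad_{t_2},ad_{t_3}$ must be a combination of $t_1,t_2,t_3$ (the relation $[t_1,z]=0$ kills all $x_i$- and $y_j$-coefficients), and then $[z,x_2]=[z,x_3]=[z,y_1]=0$ force the remaining coefficients to zero. Hence $ad$ is injective and $\dim ad(SL^{n,m})=\dim SL^{n,m}=n+m+3$, so it suffices to exhibit, for each superderivation $D$, an element $z$ with $D=ad_z$. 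Second, the simultaneous eigenvalues (weights) of the basis vectors under $(ad_{t_1},ad_{t_2},ad_{t_3})$ are $(1,0,0)$ for $x_1$, $(i,1,0)$ for $x_i$ with $2\le i\le n$, and $(j,0,1)$ for $y_j$ with $1\le j\le m$; these are pairwise distinct and nonzero. This rigidity is the engine of the argument.

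Since $ad_z$ is even for even $z$ and odd for odd $z$, I would treat the even and odd homogeneous parts of $D$ separately. For an even superderivation, $D$ preserves the $\ZZ_2$-grading and maps the nilradical $L^{n,m}$, a characteristic ideal, into itself, so $D(x_1),D(x_2)\in\langle x_1,\dots,x_n\rangle$ and $D(y_1)\in\langle y_1,\dots,y_m\rangle$. As $x_1,x_2,y_1$ generate $L^{n,m}$, these values together with $D(t_1),D(t_2),D(t_3)$ determine $D$ through the derivation identity applied to the defining brackets $[x_1,x_i]=x_{i+1}$ and $[x_1,y_j]=y_{j+1}$. I would then impose the derivation conditions on the torus relations $[t_k,t_l]=0$, $[t_k,x_i]$ and $[t_k,y_j]$, and subtract suitable inner derivations $ad_{t_1},ad_{t_2},ad_{t_3},ad_{x_1},ad_{x_2}$ to clear the diagonal parts of $D(x_1),D(x_2)$ and the images $D(t_k)$. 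The distinctness of the weights then forces every remaining off-diagonal coefficient to vanish, so the normalized $D$ is zero and the original $D$ is inner.

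For the odd part, $D$ swaps the grading, $D(L_{\bar 0})\subseteq L_{\bar 1}$ and $D(L_{\bar 1})\subseteq L_{\bar 0}$, so $D(x_1),D(x_2)\in\langle y_1,\dots,y_m\rangle$ and $D(y_1)\in\langle x_1,\dots,x_n\rangle$, up to possible torus images for the $D(t_k)$. Here the governing identities include the genuinely super condition (\ref{odder3}), $D([y_i,y_j])=[D(y_i),y_j]-[y_i,D(y_j)]$, which in Theorem \ref{solfil} already forced the coefficient of $x_1$ in $D(y_1)$ to vanish; the same mechanism applies. After imposing all four superderivation identities on the defining brackets and normalizing modulo the odd inner derivations $ad_{y_1},\dots,ad_{y_m}$, the distinct weights again collapse $D$ to an inner derivation.

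The main obstacle I anticipate is controlling the torus images $D(t_1),D(t_2),D(t_3)$: a priori each may carry a nilradical component, and the derivation conditions on $[t_k,t_l]=0$ couple these components to the images of the generators $x_1,x_2,y_1$. The crux is to verify that subtracting the inner derivations $ad_{t_k}$, which realize precisely the diagonal weight action, together with $ad_{x_1},ad_{x_2},ad_{y_1}$, suffices to annihilate these torus images; it is exactly the nonvanishing and pairwise distinctness of the weights that makes this reduction terminate and leaves $D=0$, proving that every superderivation is inner.
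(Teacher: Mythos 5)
Your overall strategy---trivial center, characteristic nilradical, torus weights, normalization modulo inner superderivations---is viable and genuinely more structural than the paper's argument (the paper simply solves the superderivation equations coefficient by coefficient, tabulates the constraints, computes $\dim Der_{\overline{0}}=n+3$, $\dim Der_{\overline{1}}=m$, and writes each $D$ explicitly as a combination of inner ones). But your even case has a genuine gap: the normalization set $\{ad_{t_1},ad_{t_2},ad_{t_3},ad_{x_1},ad_{x_2}\}$ (with $ad_{y_1}$ added in your ``crux'' paragraph) is too small, and the weight-rigidity step is invoked before its hypothesis holds. Distinctness of weights forces $D$ to be diagonal only \emph{after} $D(t_1)=D(t_2)=D(t_3)=0$ has been achieved, and to reach that normal form you must subtract $ad_{x_k}$ for \emph{every} $k=1,\dots,n$, not only for the generators. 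The reason is that for $k\geq 2$ the inner derivation $ad_{x_k}$ acts almost exclusively through the torus: $ad_{x_k}(t_1)=-kx_k$, $ad_{x_k}(t_2)=-x_k$, $ad_{x_k}(x_1)=-x_{k+1}$ (which is $0$ when $k=n$), and it annihilates everything else. These are precisely the superderivations your procedure can neither subtract nor detect.

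Concretely, take $D=ad_{x_3}$ (for $n\geq 4$; or take $ad_{x_n}$, which vanishes on the entire nilradical and acts only on $t_1,t_2$). It has no diagonal part, so your first clearing does nothing; its torus images $D(t_1)=-3x_3$, $D(t_2)=-x_3$ lie outside $span\{x_1,x_2\}$, hence cannot be annihilated by any combination of $ad_{x_1},ad_{x_2}$; and its off-diagonal coefficient $D(x_1)=-x_4$ does not vanish, contradicting your conclusion that after normalization ``every remaining off-diagonal coefficient'' vanishes. The root error is a conflation of two different facts: the values of $D$ on $x_1,x_2,y_1,t_k$ determine $D$, but the inner derivations attached to those elements do not span the derivation space; in the paper's parametrization the free coefficients $a_4,\dots,a_n,d_n$ appearing in $D(t_1),D(t_2)$ correspond exactly to the omitted $ad_{x_3},\dots,ad_{x_n}$, so $\dim Der_{\overline{0}}(SL^{n,m})=n+3$ exceeds the dimension of your normalization set once $n\geq 3$. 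The repair is exactly what you already do in the odd case (where you correctly normalize modulo all of $ad_{y_1},\dots,ad_{y_m}$): normalize modulo $ad_z$ with $z$ ranging over the whole nilradical, verify via the relations $[t_k,x_i]$, $[t_k,y_j]$ that the torus components of the $D(t_k)$ then vanish, and only then apply the weight argument. (A smaller issue of the same kind: for odd $D$ you assert $D(y_1)\in\langle x_1,\dots,x_n\rangle$, but a priori $D(y_1)$ may carry $t_k$-components; the identity on $[y_1,y_1]=0$ alone does not kill them---one needs the pairs $\{x_2,y_1\}$, $\{x_2,y_2\}$, $\{t_1,y_1\}$ as in the paper.)
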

\begin{proof} Our goal is to prove that the following inner superderivations $\{ adx_1, \dots, adx_n,$ $ adt_1,adt_2,adt_3,$ $ady_1, \dots, $ $ady_m \}$ form a basis of the space of superderivations $Der(SL^{n,m})=Der _{\overline{0}}(SL^{n,m})\oplus Der _{\overline{1}}(SL^{n,m})$, with $\{ adx_1, \dots, adx_n, adt_1,adt_2,adt_3\}$ a basis of $Der _{\overline{0}}(SL^{n,m})$  and $\{ ady_1, \dots, ady_m\}$ a basis of $Der _{\overline{1}}(SL^{n,m})$.

Let $D$ be an  even superderivation of $SL^{n,m}.$ Then taking into account the embeddings  $D(SL^{n,m}_{\overline{0}})\subset SL^{n,m}_{\overline{0}}$ and $D(SL^{n,m}_{\overline{1}})\subset SL^{n,m}_{\overline{1}}$ we set
$$\begin{array}{lll}
D(x_1)=\displaystyle\sum_{s=1}^3\alpha_s t_s+\displaystyle\sum_{k=1}^n a_k x_k,& D(x_2)=\displaystyle\sum_{s=1}^3\beta_s t_s+\displaystyle\sum_{k=1}^n b_k x_k,& D(y_1)=\displaystyle\sum_{r=1}^m p_r y_r,\\[3mm]
D(t_1)=\displaystyle\sum_{s=1}^3\gamma_s t_s+\displaystyle\sum_{k=1}^n c_k x_k,&
D(t_2)=\displaystyle\sum_{s=1}^3\delta_s t_s+\displaystyle\sum_{k=1}^n d_k x_k,&
D(t_3)=\displaystyle\sum_{s=1}^3\nu_s t_s+\displaystyle\sum_{k=1}^n e_k x_k.
\end{array}
 $$

Acording to the even superderivation condition, we can summarized the computation in the following table:

\begin{center}
\begin{tabular}{c|c}
	Pairs& Constraints\\
	\hline \hline
$\{x_1,t_1\}$& $\alpha_i=0,\ 1\leq i\leq 3$, $\gamma_1=0,\ \alpha_2=0,$ $c_k=ka_{k+1}, \ 2\leq k\leq n-1$\\
\hline
$\{x_1,t_2\}$&$\delta_1=0,$ $d_k=a_{k+1},\ 2\leq k\leq n-1$\\
\hline
$\{x_1,t_3\}$& $\nu_1=0,$ $e_k=0,\ 2\leq k\leq n-1$\\
\hline
$\begin{array}{c}
\{x_1,y_{j-1}\}\\
2\leq j\leq m-1
\end{array}$& $d(y_j)=((j-1)a_1+p_1)y_j+\displaystyle\sum_{k=j+1}^m p_{k-j+1} y_k,\ 2\leq j\leq m$ \\
\hline
$\{y_1,x_2\}$&$b_1=0,\ \beta_3=-\beta_1$\\
\hline
$\{x_1,x_2\}$& $d(x_3)=-\beta_1 x_1+(a_1+b_2)x_3+\displaystyle\sum_{k=4}^n b_{k-1}x_k$\\
\hline
$\{x_3,y_1\}$&$\beta_1=0$ $\Rightarrow$ $\beta_3=0$\\
\hline
$\begin{array}{c}
\{x_1,x_{i-1}\}\\
4\leq j\leq n-1
\end{array}$& $d(x_i)=((i-2)a_1+b_2)x_i+\displaystyle\sum_{k=i+1}^n b_{k-i+2} x_k,\ 3\leq i\leq n$ \\
\hline
%
$\{t_3,t_1\}$&$e_1=e_n=0$\\
\hline
$\{t_3,x_2\}$&$\nu_2=0$\\
\hline
$\{t_1,x_2\}$&$\beta_2=\gamma_2=0,$ $c_1=-b_3,$ $b_k=0,\ 4\leq k\leq n$\\
\hline
$\{x_2,t_2\}$&$\delta_2=d_1=0$\\
\hline
$\{t_1,t_2\}$&$c_n=n d_n$\\
\hline
%
$\{t_1,y_1\}$&$\gamma_3=0,$ $p_2=b_3,$ $p_k=0,\ 3\leq k\leq m$\\
\hline
$\{t_2,y_1\}$&$\delta_3=0$\\
\hline
$\{t_3,y_1\}$&$\nu_3=0$\\
\hline

\end{tabular}
\end{center}

Therefore, we get
$$\begin{array}{lll}
D(t_1)=-b_3x_1+\displaystyle\sum_{k=2}^{n-1}ka_{k+1} x_k+nd_nx_n,&
D(t_2)=\displaystyle\sum_{k=2}^{n-1} a_{k+1} x_k+d_nx_n,&
D(t_3)=0,\\
D(x_1)=a_1x_1+\displaystyle\sum_{k=3}^n a_k x_k,& D(x_2)=b_2x_2+ b_3 x_3,&\\
D(x_i)=((i-2)a_1+b_2)x_i+b_3x_{i+1},& 3\leq i\leq n,&\\[2mm]
D(y_1)= p_1 y_1+b_3 y_2,& D(y_j)=((j-1)a_1+p_1)y_j+b_3 y_{j+1},& 2\leq j\leq m. \\[3mm]
\end{array}$$

Thus, we conclude $dim(Der _{\overline{0}}(SL^{n,m}))=n+3$. On the other hand, the $(n+3)$ inner  superderivations $\{ adx_1, \dots, adx_n, adt_1,adt_2,adt_3\}$ are in particular even superderivations. Hence, we obtain a basis of the space $Der _{\overline{0}}(SL^{n,m})$ composed by inner even superderivations. Moreover, $D$ can be expressed via inner as follows
$$D=b_3(adx_1)-\left(\sum_{k=2}^{n-2} a_{k+1}(adx_k)\right)-d_n(adx_n)+ a_1(adt_1-2adt_2-adt_3)+b_2(adt_2)+p_1(adt_3).$$

Analogously, we are going to compute the odd superderivations. Let $D$ now be an odd superderivation. We put
$$\begin{array}{lll}
D(x_1)=\displaystyle\sum_{k=1}^m a_k y_k,& D(x_2)=\displaystyle\sum_{k=1}^m b_k y_k,& D(y_1)=\displaystyle\sum_{s=1}^3p_s t_s+\displaystyle\sum_{r=1}^n c_r y_r,\\[3mm]
D(t_1)=\displaystyle\sum_{k=1}^m d_k y_k,&
D(t_2)=\displaystyle\sum_{k=1}^m g_k y_k,&
D(t_3)=\displaystyle\sum_{k=1}^m h_k x_k.
\end{array}$$

According to the odd superderivation conditions we have the following computations:
\begin{center}
	\begin{tabular}{c|c}
		Pairs& Constraints\\
		\hline \hline
	$\begin{array}{c}
	\{x_1,x_{i-1}\}\\
	3\leq j\leq n-1
	\end{array}$& $d(x_i)=\displaystyle\sum_{k=i-1}^m b_{k-i+2} y_k,\ 3\leq i\leq n$ \\
	\hline
	$\{x_1,y_1\}$&$d(y_2)=-p_1x_1+\displaystyle\sum_{k=3}^n c_{k-1}x_k$\\
	\hline
	$\begin{array}{c}
		\{x_1,y_{j-1}\}\\
		3\leq j\leq m-1
		\end{array}$& $d(y_j)=\displaystyle\sum_{k=j+1}^n c_{k-j+1} x_k,\ 2\leq j\leq m$ \\
		\hline
		$\{x_2,y_2\}$&$p_1=0$\\
		\hline
		$\{x_2,y_1\}$&$p_2=c_1=0$\\
		\hline
		$\{t_1,y_1\}$& $p_3=0,$ $c_k=0,\ 2\leq k\leq n$\\
		\hline
		$\{x_1,t_1\}$&$d_k=ka_{k+1},\ 1\leq k\leq m-1$\\
		\hline
		$\{x_1,t_2\}$&$g_k=0,\ 1\leq k\leq m-1$\\
		\hline
		$\{x_1,t_3\}$&$h_k=a_{k+1},\ 1\leq k\leq m-1$\\

		\hline
		$\{x_2,t_1\}$&$b_1=b_k=0,\ 3\leq k\leq m$\\

		\hline
		$\{x_2,t_2\}$&$b_2=0$\\
		
		\hline
		$\{t_1,t_2\}$&$g_m=0$\\
		
		\hline
		$\{t_1,t_3\}$&$d_m=mh_m$\\
		\hline
		
	\end{tabular}
\end{center}

Thus, we get
$$D(t_1)=\displaystyle\sum_{k=1}^{m-1}ka_{k+1} y_k+m h_m y_m, \quad D(t_2)=0, \quad D(t_3)=\displaystyle\sum_{k=1}^{m-1} a_{k+1} y_k+h_m y_m,$$
$$D(x_1)=\displaystyle\sum_{k=2}^m a_k y_k,\quad D(x_i)=0, \quad 2\leq i\leq n, \quad D(y_j)=0, \quad  1\leq j\leq m.$$

This imply that $dim(Der _{\overline{1}}(SL^{n,m}))=m$. On the other hand, we have $m$ odd inner superderivations (they are $\{ ady_1, \dots, $ $ady_m\}$). Consequently, a basis of $Der _{\overline{1}}(SL^{n,m})$ form by inner odd superderivations. In particular, $D$ can be expressed via inner superderivations as follows:

$$D=-\left(\sum_{k=1}^{m-1} a_{k+1}(ady_k)\right)-h_m(ady_m).$$
\end{proof}
Note that all the computations have been duplicated by using the software Mathematica.

Consider now the maximal-dimensional solvable Lie superalgebra with model nilpotent nilradical  $SN(n_1, \cdots,n_k,1 | m_1, \cdots, m_p).$

\begin{thm} \label{thm72} Any superderivation of the Lie superalgebra $SN(n_1, \cdots,n_k,1 | m_1, \cdots, m_p)$ is inner.
\end{thm}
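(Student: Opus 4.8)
The plan is to mirror the structure of the proof of Theorem~\ref{thm71} exactly, computing the full space of superderivations by splitting into even and odd parts and showing each matches the dimension of the span of the inner superderivations. First I would list the inner superderivations: the operators $\{adx_1,\dots,x_{n_1+\cdots+n_k+1}, adt_1,\dots,adt_{k+1},adt'_1,\dots,adt'_p\}$ for the even part and $\{ady_1,\dots,ady_{m_1+\cdots+m_p}\}$ for the odd part. The goal is to prove these span the entire superderivation algebra, so the dimension count $\dim Der_{\bar 0} = (n_1+\cdots+n_k+1)+(k+1)+p$ and $\dim Der_{\bar 1} = m_1+\cdots+m_p$ must be established by direct computation.

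For the even superderivations, I would write an arbitrary $D$ respecting the embeddings $D((SN)_{\bar 0})\subset (SN)_{\bar 0}$, $D((SN)_{\bar 1})\subset (SN)_{\bar 1}$ with undetermined coefficients on the basis, then impose the even superderivation condition $D([a,b])=[D(a),b]+[a,D(b)]$ on a well-chosen collection of pairs. The key structural pairs are $\{x_1, x_{\text{generator}}\}$ to propagate the action along each filiform string, $\{t_i, x_j\}$ and $\{t'_i, y_j\}$ to pin down the torus coefficients, and the cross pairs $\{t_i,t_j\}$, $\{t_i,t'_j\}$ to kill off-diagonal torus-to-torus contributions. Since $SN$ is a multi-string generalization of $SL^{n,m}$, each of the $k$ even strings and $p$ odd strings behaves like an independent copy of the single-string computation in Theorem~\ref{thm71}, so the constraints decouple string by string. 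I expect that once the generator images are fixed, the remaining images are forced by induction along $x_1$-bracketing exactly as in the earlier proof, and every such $D$ will be expressible as a linear combination of the listed inner superderivations.

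For the odd superderivations I would again set up an arbitrary degree-$\bar 1$ endomorphism sending even basis vectors into $(SN)_{\bar 1}$ and odd basis vectors into $(SN)_{\bar 0}$, and the decisive step is verifying that the analog of equation~(\ref{odder3}), namely $D([y_i,y_j])=[D(y_i),y_j]-[y_i,D(y_j)]$, forces the coefficient of $x_1$ in $D(y_i)$ to vanish for every odd generator $y_i$ (those indexed by $i\in\{1,m_1+1,\dots,m_1+\cdots+m_{p-1}+1\}$), precisely the argument already run in the proof of the second theorem of Section~4. This guarantees that the odd derivations reduce to Lie-algebra derivations of the $\mathbb{Z}_2$-graded Lie algebra, whose only contributions come from the $ady_j$.

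The main obstacle will be the sheer bookkeeping of the multi-index notation: unlike the single-string $SL^{n,m}$, here the strings have variable lengths $n_1,\dots,n_k$ and $m_1,\dots,m_p$, so one must carefully track which torus element $t_{j+2}$ (resp.\ $t'_{j+1}$) acts on which block and ensure the cross-bracket constraints among distinct tori genuinely vanish rather than coupling different blocks. I would organize this by treating each homogeneous string separately, noting that the only shared generator is $x_1$, and then assembling the per-string dimension counts into the total. Once the two dimension equalities are in hand and the inner superderivations are checked to be linearly independent (immediate, since their images on the generators are distinct), the conclusion $Der(SN)=Der_{\bar 0}(SN)\oplus Der_{\bar 1}(SN)$ consists entirely of inner superderivations follows, and I would close by remarking that the explicit expansion of a general $D$ in terms of the $adx_i$, $adt_j$, $adt'_j$, $ady_j$ proceeds exactly as in Theorem~\ref{thm71}.
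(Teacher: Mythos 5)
Your proposal follows essentially the same route as the paper: write a general even (degree-preserving) and odd (degree-reversing) superderivation of $SN(n_1,\cdots,n_k,1\,|\,m_1,\cdots,m_p)$, impose the superderivation conditions pair by pair to pin down its general form string by string, obtain the dimension counts $\dim Der_{\overline{0}}=(n_1+\cdots+n_k+1)+k+1+p$ and $\dim Der_{\overline{1}}=m_1+\cdots+m_p$, and conclude by linear independence of the listed inner superderivations $adx_i$, $adt_j$, $adt'_j$, $ady_j$ that they form a basis. The paper's proof is exactly this computation (presented as the resulting closed-form expressions for $D$ on the basis), so your plan matches it in both structure and conclusions.
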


\begin{proof} We are going to prove that the following inner superderivations $$\{ adx_1, \dots, adx_{n_1+\cdots n_k +1}, adt_1,\dots,adt_{k+1},adt'_1,\dots,adt'_p, ady_1, \dots, ady_{m_1+\cdots+m_p}\}$$ form a basis of the space $Der(SN(n_1, \cdots,n_k,1 | m_1, \cdots, m_p))$ with $\{ adx_1, \dots, adx_{n_1+\cdots n_k +1},$ $adt_1,\dots,$ $adt_{k+1},$ $adt'_1,\dots,adt'_p\}$ a basis for even superderivations, $Der _{\overline{0}}(SN(n_1, \cdots,n_k,1 | m_1, \cdots, m_p))$,  and $\{ ady_1, \dots, $ $ady_{m_1+\cdots+m_p}\}$ a basis for the odd ones, $Der _{\overline{1}}(SN(n_1, \cdots,n_k,1 | m_1, \cdots, m_p))$.

Let $D$ be an even superderivation of $SN(n_1, \cdots,n_k,1 | m_1, \cdots, m_p)$. Then from superderivation property we derive 
	$$\begin{array}{l}
	D(x_1)=\alpha_1x_1+\displaystyle\sum_{s=3}^{n_1+1}  \alpha_{s}x_s
	+\displaystyle\sum_{j=1}^{k-1}\left(\sum_{s=n_1+\dots+n_j+3}^{n_1+\dots+n_{j+1}}\alpha_{s} x_s\right),\\[2mm]
	D(x_i)=((i-2)\alpha_2+\beta)x_i,\quad 2\leq i\leq n_1+1,\\[2mm]
		
	D(x_{n_1+\dots+n_j+i})=((i-2)\alpha_1+a_j) x_{n_1+\dots+n_j+i},\quad  1\leq j\leq k-1,\ 2\leq i\leq n_{j+1}+1,\\[3mm]
	
	D(t_1)=\displaystyle\sum_{s=2}^{n_1} s \alpha_{s+1}x_s
	+\displaystyle\sum_{j=1}^{k-1}\left(\sum_{s=n_1+\dots+n_j+2}^{n_1+\dots+n_{j+1}}s\alpha_{s+1} x_s\right)+\\[3mm]
\qquad \
	+\displaystyle\sum_{j=1}^{k}(n_1+\dots+n_j+1) b_j x_{n_1+\dots+n_j+1},\\[3mm]
D(t_2)=\displaystyle\sum_{s=2}^{n_1}  \alpha_{s+1}x_s
+ b_1 x_{n_1+1},\\[3mm]
D(t_{j+2})=\displaystyle\sum_{s=n_1+\dots+n_j+2}^{n_1+\dots+n_{j+1}}  \alpha_{s+1}x_s
+ b_{j+1} x_{n_1+\dots+n_{j+1}+1},\qquad 1\leq j\leq k-1,\\[2mm]
	
D(t'_j)=0,\qquad 1\leq j\leq p,\\[2mm]
D(y_i)=((i-1)\alpha_1+\gamma)y_i,\qquad 1\leq i\leq m_1,\\[3mm]
D(y_{m_1+\dots+m_j+i})=((i-1)\alpha_1+q_j)  y_{m_1+\dots+m_j+i},\quad 1\leq j\leq p-1,\ 1\leq i\leq m_{j+1}.

	\end{array}$$

	Let $D$ be an odd superderivation of $SN(n_1, \cdots,n_k,1 | m_1, \cdots, m_p)$. Then the superderivation property imply 
$$\begin{array}{l}
D(x_1)=\displaystyle\sum_{s=2}^{m_1}  \alpha_{s}y_s
+\displaystyle\sum_{j=1}^{p-1}\left(\sum_{s=m_1+\dots+m_j+2}^{m_1+\dots+m_{j+1}}\alpha_{s} y_s\right),\\[3mm]
D(x_i)=0,\quad 2\leq i\leq n_1+\dots+n_k+1,\\[3mm]
D(t_1)=\displaystyle\sum_{s=1}^{m_1-1} s \alpha_{s+1}y_s
+\displaystyle\sum_{j=1}^{p-1}\left(\sum_{s=m_1+\dots+m_j+1}^{m_1+\dots+m_{j+1}-1}s\alpha_{s+1} y_s\right)+\displaystyle\sum_{j=1}^{p} (m_1+\dots+m_j)\delta_j y_{m_1+\dots+m_j},\\[3mm]
D(t_i)=0,\quad 2\leq i\leq k+1,\\[3mm]
D(t'_1)=\displaystyle\sum_{s=1}^{m_1-1} \alpha_{s+1}y_s
+\displaystyle\sum_{j=1}^{p} \delta_j y_{m_1+\dots+m_j},\\[2mm]
D(t'_i)=0,\quad 2\leq i\leq p,\\[2mm]
D(y_j)=0,\quad 1\leq j\leq m_1+\dots+m_p.
\end{array}$$

Therefore, we have
$$dim(Der _{\overline{0}}(SN(n_1, \cdots,n_k,1 | m_1, \cdots, m_p))=(n_1+\cdots n_k +1)+k+1+p,$$
$$dim(Der _{\overline{1}}(SN(n_1, \cdots,n_k,1 | m_1, \cdots, m_p))=m_1+\cdots+m_p.$$

Now, since both sets of inner superderivations of the statement of the Theorem, even and odd, are linearly independent we conclude that the set $$\{ adx_1, \dots, adx_{n_1+\cdots n_k +1}, adt_1,\dots,adt_{k+1},adt'_1,\dots,adt'_p, ady_1, \dots, ady_{m_1+\cdots+m_p}\}$$ constitutes a basis of the superalgebra of superderivations $Der(SN(n_1, \cdots,n_k,1 | m_1, \cdots, m_p))$.
\end{proof}

Since the proofs of the following results based on the application the same arguments and similar computations as in the proofs of Theorems \ref{thm71} and \ref{thm72} we present summaries of their proofs.

\begin{thm} Any superderivation of the Leibniz superalgebra $SLP^{n,m}$ is inner.
\end{thm}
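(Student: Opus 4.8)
The plan is to follow the same two-step strategy used in Theorems \ref{thm71} and \ref{thm72}, but working with right multiplication operators $R_z$ (which are the inner superderivations of a Leibniz superalgebra) in place of the adjoint operators of the Lie case. First I would pin down the inner superderivations. Scanning the multiplication table of $SLP^{n,m}$ for the elements occurring as a \emph{right} argument, one sees these are only $x_1,t_1,t_2,t_3$; hence $R_z=0$ for every $z\in\{x_2,\dots,x_n,y_1,\dots,y_m\}$, i.e. $\mathrm{span}\{x_2,\dots,x_n,y_1,\dots,y_m\}\subseteq Ann(SLP^{n,m})$. Consequently the only nonzero inner superderivations are the four \emph{even} operators $R_{x_1},R_{t_1},R_{t_2},R_{t_3}$, and a short check shows they are linearly independent. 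In particular there are no odd inner superderivations, so to prove the theorem I must establish both $\dim Der_{\overline 0}(SLP^{n,m})=4$ with $Der_{\overline 0}=\mathrm{span}\{R_{x_1},R_{t_1},R_{t_2},R_{t_3}\}$, and $Der_{\overline 1}(SLP^{n,m})=0$.

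For the even part I would write a generic even superderivation $D$ through its values on the Leibniz generating set $\{x_1,x_2,y_1,t_1,t_2,t_3\}$ (the remaining $x_i,y_j$ being produced by right bracketing with $x_1$), respecting $D(L_{\overline 0})\subset L_{\overline 0}$ and $D(L_{\overline 1})\subset L_{\overline 1}$. Imposing the Leibniz derivation identity $D([a,b])=[D(a),b]+[a,D(b)]$ on a spanning family of pairs — and here, unlike the Lie case, one must test both argument orders (e.g. $\{x_i,x_1\}$ alongside $\{x_1,t_1\}$ and $\{t_1,x_1\}$) because the bracket is not skew — yields a linear system whose solution space I expect to come out exactly $4$-dimensional and to coincide with $\mathrm{span}\{R_{x_1},R_{t_1},R_{t_2},R_{t_3}\}$; I would finish by exhibiting $D$ explicitly as a combination of these four, exactly as in Theorem \ref{thm71}.

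The decisive step, and the main obstacle, is the odd part, where I must show $Der_{\overline 1}=0$. Writing a generic odd $D$ (so $D(L_{\overline 0})\subset L_{\overline 1}$ and $D(L_{\overline 1})\subset L_{\overline 0}$) on the generating set and imposing (\ref{odderL1})--(\ref{odderL4}) with the sign $(-1)^{|d||y|}$, a first wave of constraints comes from the diagonal actions: feeding $[x_i,t_2]=x_i$ and $[y_j,t_3]=y_j$ into (\ref{odderL1}) and (\ref{odderL3}) forces $D(x_i)=0$ for $2\le i\le n$ and $D(y_j)=0$ for $1\le j\le m$, since $t_2$ (resp. $t_3$) does not act on the odd (resp. even) part. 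What then remains are the images $D(x_1),D(t_1),D(t_2),D(t_3)$ of the generators outside the nilradical; these are killed by feeding the vanishing products $[t_i,x_1]=0$, $[t_i,t_j]=0$ and $[t_i,t_i]=0$ into the derivation identity, which is precisely the leverage absent in the nilradical $LP^{n,m}$ — recall that $LP^{n,m}$ itself does admit nonzero odd Leibniz derivations (cf. the proof of Theorem \ref{solLeib}), so the vanishing here is genuinely a consequence of the torus. The only real difficulty is organizational: respecting the Leibniz asymmetry by testing each relation in the correct argument order and keeping the odd-derivation signs consistent throughout. Once $Der_{\overline 1}=0$ is established, $Der(SLP^{n,m})=Der_{\overline 0}=\mathrm{span}\{R_{x_1},R_{t_1},R_{t_2},R_{t_3}\}$ consists of inner superderivations, proving the claim.
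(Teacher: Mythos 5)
Your proposal is correct and follows essentially the same route as the paper: compute the even superderivations directly (a four-parameter family), show $Der_{\overline 1}(SLP^{n,m})=\{0\}$, and conclude that $Der(SLP^{n,m})=\mathrm{span}\{R_{x_1},R_{t_1},R_{t_2},R_{t_3}\}$, which is exactly what the paper's proof does. One small slip in your sketch: $[t_1,x_1]=-x_1\neq 0$, and the vanishing products $[t_i,x_1]=0$ $(i=2,3)$ do not constrain $D(x_1)$ for odd $D$ — instead $D(x_1)$ is killed by $[x_1,t_3]=0$ (since $R_{t_3}$ is the identity on the odd part), or by first getting $D(t_1)=0$ from $[t_1,t_3]=0$ and then using $[t_1,x_1]=-x_1$; this does not affect the viability of the argument.
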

\begin{proof} As a result of computing of the odd and even superderivations properties of the Leibniz superalgebra $SLP^{n,m}$ we obtain $Der_{\bar{1}}(SLP^{n,m})=\{0\}$ and for an arbitrary $d\in Der_{\bar{0}}(SLP^{n,m})$ we get $$\begin{array}{lll}
d(x_1)=\alpha x_1,&
d(x_i)=(\beta+(i-2)\alpha) x_i-\gamma x_{i+1},&2\leq i\leq n,\\{}
d(t_1)=\gamma x_1,&
d(t_2)=d(t_3)=0,  &\\{}
d(y_1)=\delta y_1-\gamma y_2,&
d(y_j)=(\delta+(j-1)\alpha) y_j-\gamma y_{j+1},&2\leq j\leq m,
\end{array}$$
for some parameters $\alpha, \beta, \gamma, \delta.$

Consequently, $dim(Der(SLP^{n,m}))=4$ and hence, we obtain $Der(SLP^{n,m})=span\{R_{x_1},$ $R_{t_1},$ $R_{t_2},$ $R_{t_3}\}$.
In particular, $d$ can be expressed via inner derivations as follows:
$$d=-\gamma R_{x_1}+\alpha R_{t_1}+\beta R_{t_2}+\delta R_{t_3}.$$
\end{proof}

\begin{thm} Any superderivation of the Leibniz superalgebra $SNP(n_1, \cdots,n_k,1 | m_1,\cdots, m_p)$ is inner.
\end{thm}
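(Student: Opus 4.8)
The plan is to mirror exactly the strategy of Theorems \ref{thm71} and \ref{thm72}, adapted to the Leibniz setting already used in Theorem \ref{solLeib}: compute the spaces $Der_{\bar{0}}(SNP)$ and $Der_{\bar{1}}(SNP)$ directly from the superderivation identities, determine their dimensions, and then check that the inner superderivations (the right multiplications $R_z$, which by the preliminaries are genuine superderivations with $|R_z|=|z|$) already exhaust these spaces. First I would pin down which inner superderivations can be nonzero. Since $R_z$ vanishes unless $z$ occurs as the right-hand entry of some nonzero bracket, inspection of the law of $SNP(n_1,\cdots,n_k,1\mid m_1,\cdots,m_p)$ shows that the only candidates are $R_{x_1}, R_{t_1},\dots,R_{t_{k+1}}, R_{t'_1},\dots,R_{t'_p}$, all of which are even. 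Their count is $1+(k+1)+p=k+p+2$, and they are linearly independent because $R_{x_1}$ is nilpotent (shifting indices within each even and odd block) while the $R_{t_i}$ and $R_{t'_j}$ act diagonally with mutually distinct weight patterns on the basis vectors.

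Next I would carry out the even computation. Writing a general $d\in Der_{\bar{0}}(SNP)$ in coordinates (respecting $d(L_{\bar 0})\subset L_{\bar 0}$, $d(L_{\bar 1})\subset L_{\bar 1}$) and imposing the even superderivation identity on a spanning set of pairs, I would use the generator $x_1$ to propagate constraints through each block via $d([x_1,x_j])$, $d([x_1,y_j])$, and the torus relations $\{x_i,t_1\}$, $\{x_i,t_2\}$, $\{x_{n_1+\cdots+n_j+i},t_{j+2}\}$, $\{y_\ast,t'_\ast\}$, together with the annihilating pairs (e.g. $\{t'_j,t_1\}$, $\{t_i,t_j\}$) to kill spurious components. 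The outcome I anticipate is that the even superderivation space has exactly dimension $k+p+2$, realized by the free parameters corresponding to the scalars attached to $R_{x_1}$ and to each diagonal generator $R_{t_i}$, $R_{t'_j}$; in particular $d(t'_j)$ and all off-block entries must be forced to zero.

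Finally I would treat the odd case. Parametrizing an odd superderivation and imposing the Leibniz odd conditions (\ref{odderL1})--(\ref{odderL4}), the key leverage is the vanishing of (\ref{odderL4}): exactly as in Theorem \ref{solLeib}, evaluating $d([y_i,y_i])$ at each odd generator forces the $x_1$-coefficients of $d(y_i)$ to vanish, after which (\ref{odderL2}) collapses as well; combined with the diagonal torus action this should drive every odd superderivation to zero, giving $Der_{\bar{1}}(SNP)=\{0\}$. Concluding, $\dim Der(SNP)=k+p+2$ equals the number of independent inner superderivations listed above, so those inner operators form a basis and every superderivation is inner. The main obstacle is the even computation: the nilradical is stratified by the composition $(n_1,\dots,n_k\mid m_1,\dots,m_p)$, so the bookkeeping of indices across blocks, together with keeping the Leibniz (non-antisymmetric) products and the sign $(-1)^{|d||y|}$ straight, is delicate, and one must verify that no extra even derivations survive beyond the span of the $R_z$.
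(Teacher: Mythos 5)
Your proposal follows essentially the same route as the paper's proof: the paper likewise computes the even superderivations explicitly (finding dimension $k+p+2$, with free parameters matching the scalars on $R_{x_1},R_{t_1},\dots,R_{t_{k+1}},R_{t'_1},\dots,R_{t'_p}$), shows $Der_{\bar{1}}(SNP(n_1,\cdots,n_k,1\mid m_1,\cdots,m_p))=\{0\}$, and concludes by expressing an arbitrary even superderivation as an explicit linear combination of those right multiplications. Your identification of the nonzero inner superderivations, their count, and the vanishing of the odd part all agree with the paper, so the plan is correct and not a genuinely different argument.
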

\begin{proof}
	Analogously to the previous superalgebra, we obtain that
	$Der_{\bar{1}}(SLP(n_1, \cdots,n_k,1 | m_1,\cdots, m_p))=\{0\}$ and for an arbitrary even superderivation $d$ we derive the following:
	$$\begin{array}{ll}
	d(x_1)=\alpha x_1,&\\{}
	d(x_i)=(\beta+(i-2)\alpha)x_i-\gamma x_{i+1},&3\leq i\leq n_1,\\{}
	d(x_{n_1+1})=(\beta+(n_1-1)\alpha)x_{n_1+1},&\\{}
	d(x_{n_1+\dots+n_j+2})=\mu_j x_{n_1+\dots+n_j+2}-\gamma x_{n_1+\dots+n_j+3},&1\leq j\leq k-1,\\{}
	d(x_{n_1+\dots+n_j+i})=(\mu_j+(i-2)\alpha) x_{n_1+\dots+n_j+i}-\gamma x_{n_1+\dots+n_j+i+1},&1\leq j\leq k-1,\ 3\leq j\leq n_{j+1},\\{}
	d(x_{n_1+\dots+n_{j+1}+1})=(\mu_j+(n_{j+1}-1)\alpha) x_{n_1+\dots+n_{j+1}+1},&1\leq j\leq k-1,\\{}
	
	d(t_1)=\gamma x_1,&\\{}
	d(t_i)=d(t'_j)=0,&2\leq i\leq k+1,\ 1\leq j\leq p,\\{}
d(y_1)=\delta y_1-\gamma y_2,&\\{}
d(y_j)=(\delta +(j-1)\alpha) y_j-\gamma y_{j+1},& 2\leq j\leq m_1-1,\\{}
d(y_{m_1})=(\delta+(m_1-1)\alpha)y_{m_1},&\\{}

d(y_{m_1+\dots+m_s+1})=\nu_s y_{m_1+\dots+m_s+1}-\gamma y_{m_1+\dots+m_s+2},& 1\leq s\leq p-1,\\{}
d(y_{m_1+\dots+m_s+i})=(\nu_s+(i-1)\alpha) y_{m_1+\dots+m_s+i}-\gamma y_{m_1+\dots+m_s+i+1},& 1\leq s\leq p-1,\ 2\leq i\leq m_{s+1}-1,\\{}
d(y_{m_1+\dots+m_{s+1}})=(\nu_s+(m_{s+1}-1)\alpha) y_{m_1+\dots+m_{s+1}},& 1\leq s\leq p-1.
	\end{array}$$
	
	Then, $dim(Der(SLP(n_1, \cdots,n_k,1 | m_1,\cdots, m_p)))=k+p+2.$ On the other hand, we have $k+p+2$ inner derivations, $\{R_{x_1},R_{t_1},R_{t_2},R_{t_3},\dots,R_{t_{k+1}},R_{t'_1},R_{t'_{2}},\dots,R_{t'_{p}}\}.$ 
 In particular, $d$ can be expressed via inner derivations as follows:
	
$$d=-\gamma R_{x_1}+\alpha R_{t_1}+\beta R_{t_2}+\sum_{j=1}^{k-1}\mu_j R_{t_{j+2}}+\delta R_{t'_1}+\sum_{s=1}^{p-1}\nu_s R_{t'_{s+1}}.$$

\end{proof}

\bibliographystyle{amsplain}

\begin{thebibliography}{99}

\bibitem{Abdurasulov} Abdurasulov K.K., Maximal solvable extension of nilpotent Leibniz algebras,
{\em Proc. Uzbek. Acad. Scie.,} {\bf 5} (2019), 3 - 8.
	
\bibitem{AO0} Albeverio S.; Ayupov Sh.A.; Omirov B.A. On nilpotent and simple Leibniz algebras. {\em Comm. Algebra} {\bf 33} (2005), no. 1, 159--172.

\bibitem{Omirov} Albeverio S.; Omirov B.A.; Rakhimov I.S. Varieties of nilpotent complex Leibniz algebras of dimensions less than five. {\em Comm. Algebra} {\bf 33} (2005), no. 5, 1575 - 1585.

\bibitem{rigid_algebras}  Ancochea Berm\'udez, J. M.; Campoamor-Stursberg, R.  Cohomologically rigid solvable Lie algebras with a nilradical of arbitrary characteristic sequence. {\em Linear Algebra Appl.} {\bf 488} (2016) 135 - 147.

\bibitem{Casas1}  Ancochea Berm\'{u}dez, J. M.; Campoamor-Stursberg, R.; Garc\'{i}a Vergnolle, L. Indecomposable Lie algebras with nontrivial Levi decomposition cannot have filiform radical. {\em Int. Math. forum.} {\bf 1} (2006),  309--316.

\bibitem{Ancochea}  Ancochea Berm\'udez, J. M.; Campoamor-Stursberg, R.; Garc\'{i}a Vergnolle, L. Classification of Lie algebras with naturally graded quasi-filiform nilradicals. {\em J. Geom. Phys.} {\bf 61} (2011), no. 11,  2168–-2186.

\bibitem{Bor07} Bordemann, M.; G\'omez, J.R.; Khakimdjanov,Yu.; Navarro, R.M. Some deformations of nilpotent Lie superalgebras. {\em J. Geom. Phys.} {\bf 57} (2007), no. 5, 1391--1403.

\bibitem{BS} Burde D.; Steinhoff C.; Classification of orbit closures of $4$-dimensional complex Lie algebras. {\em J. Algebra} {\bf 214} (1999), 729--739.

\bibitem{solvableSA} Camacho, L.M.;  Fern\'andez-Barroso, J.M.; Navarro, R.M. Classification of solvable  Lie  and Leibniz superalgebras with a given nilradical. {\em To appear in Forum Mathematicum.}

\bibitem{lisa} Camacho, L.M.; Omirov, B.A.; Masutova K.K.;  Solvable Leibniz algebras with filiform nilradical. {\em Bull. Malays. Math. Sci. Soc.} {\bf 39} (2016), no. 1, 283--303.

\bibitem{LMA} Casas, J.M.; Ladra, M.; Omirov, B.A.; Karimjanov, I.A. Classification of solvable Leibniz algebras with  null-filiform nilradical. {\em Linear and Multilinear Algebra} {\bf 61} (2013), no. 6, 758--774.

\bibitem{2} Casas, J.M.; Ladra, M.; Omirov, B.A.; Karimjanov, I.A. Classification of solvable Leibniz algebras with naturally graded filiform nilradical. {\em Linear Algebra Appl.} {\bf 438} (2013), no. 7, 2973--3000.

\bibitem{metodoLeib} Casas, J.M.; Ladra, M.; Omirov, B.A.; Karimjanov, I.A. Classification of solvable Leibniz algebras with null-filiform nilradical. {\em Lin. Multilin. Algebra} 61(6), (2013), pp. 758-–774.

\bibitem{libroKluwer} Goze, M.; Khakimdjanov, Yu. Nilpotent Lie algebras.  {\it Mathematics and its Applications}, 361. Kluwer Academic Publishers Group, Dordrecht, 1996, xvi+336 pp.  ISBN 0-7923-3932-0.

\bibitem{GO} Grunewald, F.; O'Halloran, J. Varieties of nilpotent Lie algebras of dimension less than six. {\em J. Algebra} {\bf  112} (1988), no. 2, 315--326.

\bibitem{Kac2} { Kac, V.G.,} { A Sketch of Lie Superalgebra Theory.} {\em  Commun. math. Phys.} 53, (1977), 31 - 64.

\bibitem{UzMathJournal} Khalkulova, Kh.A.; Abdurasulov, K.K. Solvable Lie algebras with maximal dimension of complementary space to nilradical. {\em Uzbek Math. J.,} {\bf 1} (2018), 90 - 98, DOI: 10.29229/uzmj.2018-1-8.

\bibitem{null-filiform} Khudoyberdiyev, A.Kh.; Omirov, B.A. Infinitesimal deformations of null-filiform Leibniz superalgebras. {\em J. Geom. Phys.} {\bf 74} (2013), 370 - 380.

\bibitem{2c} Ladra, M.; Masutova, K. K.; Omirov, B. A. Corrigendum to ``Classification of solvable Leibniz algebras with naturally graded filiform nilradical'' [Linear Algebra Appl. {\bf 438} no. 7 (2013), 2973 - 3000]. {\em Linear Algebra Appl.}  {\bf 507} (2016), 513 - 517.

\bibitem{Malcev} Malcev, A.I. Solvable Lie algebras. {\em Amer. Math. Soc. Transl.} 27 (1950).

\bibitem{metodoLie} Mubarakzjanov, G.M. On solvable Lie algebras. {\em Izv. Vyssh. Uchebn. Zaved. Mat.} 1(32) (1963), 114 - 123 (in Russian).

\bibitem{Casas11} Ndogmo, J.C.; Winternitz, P. Solvable Lie algebras with abelian nilradicals. {\em J. Phys. A.} {\bf 27} (1994),  405 - 423.

\bibitem{codimension} Repov$\check{\rm s}$, Du$\check{\rm s}$an D. and Zaicev, Mikhail V.  Codimension growth of solvable Lie superalgebras, {\em J. Lie Theory} {\bf 28} (2018), no. 4, 1189 - 1199.

\bibitem{Onindecomposable}  Rodr\'{i}guez-Vallarte, M.C. and Salgado, G. On indecomposable solvable Lie superalgebras having a Heisenberg nilradical. {\em J.  Algebra  Appl.}  {\bf 15} (2016), no. 10, 1650190, 26pp.

\bibitem{LAA2010}  $\check{\rm S}$nobl, L.; Kar\'asek, D. Classification of solvable Lie algebras with a given nilradical by means of solvable extensions of its subalgebras. {\em Linear Algebra  Appl}. {\bf 432} (2010), no. 7, 1836 - 1850.

\bibitem{Casas13} S. Tremblay and P. Winternitz. Solvable Lie algebras with triangular nilradicals. {\em J. Phys. A.} {\bf 31} (1998),  789 - 806.

\bibitem{Casas15} Y. Wang, J. Lin and S. Deng. Solvable Lie algebras with quasifiliform nilradicals. {\em  Comm. Algebra.} {\bf 36} (2008),  4052 - 4067.
\end{thebibliography}

\end{document}